\documentclass[11pt]{article}

\usepackage[dvipdfmx]{graphicx}
\usepackage{amssymb}
\usepackage{unit}
\usepackage{mathrsfs}
\usepackage{amsmath}
\usepackage{subfigure}
\usepackage{amsthm}
\usepackage{amscd}

\theoremstyle{definition}
\newtheorem{theorem}{Theorem}
\newtheorem{definition}[theorem]{Definition}

\newtheorem{remark}[theorem]{Remark}

\textheight 225.0mm
\textwidth 155.0mm
\topmargin -15.0mm
\oddsidemargin 2.0mm
\footskip 15.0mm

\newcommand{\h}{{\bf h}}

\newcommand{\x}{{\bf x}}
\newcommand{\y}{{\bf y}}
\newcommand{\z}{{\bf z}}
\newcommand{\w}{{\bf w}}
\newcommand{\A}{{\bf A}}

\newcommand{\F}{{\bf F}}

\newcommand{\bS}{{\bf S}}

\newcommand{\V}{{\bf V}}
\newcommand{\bC}{\mathbb{C}}
\newcommand{\bN}{\mathbb{N}}
\newcommand{\bR}{\mathbb{R}}
\newcommand{\bZ}{\mathbb{Z}}
\newcommand{\cA}{\mathcal{A}}
\newcommand{\cB}{\mathcal{B}}
\newcommand{\sC}{\mathscr{C}}
\newcommand{\sD}{\mathscr{D}}
\newcommand{\sF}{\mathscr{F}}

\newcommand{\sL}{\mathscr{L}}

%

%
\title{%
Koopman resolvent: A Laplace-domain analysis\\of nonlinear autonomous dynamical systems
\thanks{The work was partially supported by JST, PRESTO Grant No.\,JPMJPR1926 (Y.S.), and 
the ARO-MURI grant W911NF-14-1-0359 (I.M.).}%
}%

\author{%
Yoshihiko Susuki\footnote{Corresponding author; E-mail to: susuki@eis.osakafu-u.ac.jp, susuki@ieee.org; Department of Electrical and Information Systems, Osaka Prefecture University, Japan, and JST, PRESTO, Japan.}
~~~
Alexandre Mauroy\footnote{Department of Mathematics and Namur Center for Complex Systems (naXys), University of Namur, Belgium.}
~~~
Igor Mezi\'c\footnote{Department of Mechanical Engineering, University of California, Santa Barbara, United States.}
}%

\date{}

\begin{document}
\maketitle

\begin{abstract}
The motivation of our research is to establish a Laplace-domain theory that provides principles and methodology to analyze and synthesize systems with nonlinear dynamics.  
A semigroup of composition operators defined for nonlinear autonomous dynamical systems---the Koopman 
semigroup and its associated Koopman generator---plays a central role in this study. 
We introduce the resolvent of the Koopman generator, which we call the Koopman resolvent, and provide its spectral characterization for three types of nonlinear dynamics: ergodic evolution on an attractor, convergence to a stable equilibrium point, and convergence to a (quasi-)stable limit cycle. 
This shows that the Koopman resolvent provides the Laplace-domain representation of such nonlinear autonomous dynamics.  
A computational aspect of the Laplace-domain representation is also discussed with emphasis on non-stationary Koopman modes. 
\end{abstract}



\section{Introduction}
\label{sec:intro}

Over the last decade, the Koopman operator framework has attracted attention in theory and practice of nonlinear dynamical systems. It is however backed by a long tradition.
Bernard Koopman \cite{Koopman_PNAS17} proposed the use of linear operators on Hilbert spaces to study (nonlinear) Hamiltonian systems through the \emph{Koopman operator} and its spectrum.  
The Koopman operator is a composition operator based on a (possibly nonlinear) transformation \cite{Lasota:1994}.
Spectral analysis of the Koopman operator was traditionally conducted in ergodic theory of dynamical systems \cite{Arnold:1968,Peterson:1983,Lasota:1994} and has been more recently exploited to deal with various applications in science and engineering: see, e.g., \cite{Marko_CHAOS22,TheBook:2020}. 
It can capture the full information of the underlying nonlinear systems within a linear (but infinite-dimensional) setting and therefore mirrors the classical approach to linear systems. 
In particular, this approach allows to use spectral analysis for solving a nonlinear problem without lacking any information, in contrast to standard linearization techniques in state space.

In this paper, we focus on the use of spectral analysis of the Koopman operator to explore the Laplace-domain representation of nonlinear autonomous dynamical systems. 
The Laplace domain is a classical approach to analysis and synthesis of linear autonomous systems: see, e.g., \cite{Oppenheim:1997,Callier:1991,Hespanha:2009}. 
This approach provides a systematic method to characterize the systems through linear algebra and complex analysis techniques, and has been historically exploited in systems theory and signal processing. 
If a similar approach is established for nonlinear systems, it will provide principles and methodology to analyze (and hopefully synthesize) the systems in a systematic way. 

A real-world problem that motivates the nonlinear generalization of Laplace-domain theory is for example the engineering of complex power grids. 
Traditional Laplace-domain theory has been used for assessment and synthesis of power grid dynamic performance such as frequency stability and stabilization: see, e.g. \cite{Kundur_PSSC,Andersson_DCEPS}. 
This is successful for linear regime of power grid dynamics, that is, dynamics evolving in time in a small region close to a normal operating condition of a grid, which corresponds to an asymptotically stable equilibrium point of the associated dynamical model. 
However, this approach is not valid for dynamics evolving apart from the small region, which is referred to as the large-signal or transient stability problem and has been crucial to the occurrence of wide-spread disturbance \cite{Corsi_IEEEPESGM04,
Susuki_JNLS09}.
In this context, the systematic assessment and synthesis of power grid dynamic performance via the nonlinear generalization of Laplace-domain theory (naturally extended from the traditional linear one) would be desirable for practical use.

\begin{table}[t]
\centering
\caption{Laplace-domain representation of nonlinear autonomous dynamical  systems studied in this paper}
\label{tab:idea}
\begin{tabular}{ccc}\hline\hline\noalign{\vskip 1mm}
& Linear & Nonlinear
\\\hline\noalign{\vskip 1mm}
State-space representation & $\dot{\x}=\A\x$ & $\dot{\x}=\F(\x)$ \\
with scalar-valued output & $y={\bf c}^\top\x$ & $y=f(\x)$
\\\noalign{\vskip 1mm}\hline\noalign{\vskip 2mm}
Laplace transform $Y({s}; \x)$ of output $\{y(t)\}_{t\geq 0}$ & ${\bf c}^\top({s}{\bf I}-\A)^{-1}\x$ & $[{R} ({s}; {L})f](\x)$ \\
with domain in ${s}$, & ${s}\in\bC\setminus \sigma(\A)$ & ${s}\in\bC\setminus \sigma({L})$ \\
parameterized by initial state $\x$ & $\x\in\bR^n$ & $\x\in\cB\subseteq\bR^n$
\\\noalign{\vskip 2mm}\hline\hline
\end{tabular}
\end{table}

Our main idea on the Laplace-domain representation is summarized in Table\,\ref{tab:idea}, whose details will be explained throughout the paper.
When we focus on a nonlinear autonomous dynamical system with scalar-valued output, the Laplace transform of the output is represented by the so-called \emph{resolvent} (operator) $R({s}; {L})$ of the Koopman generator ${L}$ defined from the semigroup of Koopman operators, which we term as the \emph{Koopman resolvent}.  
This shows that the Laplace transform of the nonlinear output is clearly characterized with the system properties through the Koopman semigroup ${L}$. 
For systems with point and (quasi-)periodic attractors, formulae for spectral expansion of the Koopman semigroup ${L}$ are reported in \cite{Mezic_ARFM45,Mauroy_PD261,Mezic_JNS2019}. 
In this paper, we define the notion of Koopman resolvent $R({s}; {L})$, and building on these previous results, we provide expansion formulae of the Koopman resolvent $R({s}; {L})$ for the systems with: (i) dynamics on a (compact) ergodic (possibly aperiodic) attractor, and dynamics evolving to (ii) a stable equilibrium point and (iii) a limit cycle.
The expansion formulae do not rely on any linearization in state space and thus provide the Laplace transform of the nonlinear output, which is clearly an advantage against classical linearization in a neighborhood of the attractor. 
To the best of our knowledge, we propose the first study of the Koopman resolvent and its expansion formulae. 
The formulae could be used for structural analysis of the systems such as location of modes (poles), which mirror the classical approach to linear autonomous systems. 
In addition, we discuss a computational aspect of the Laplace-domain representation with emphasis on non-stationary parts of Koopman eigenvalues and Koopman modes \cite{Rowley_JFM641}.

The rest of this paper is organized as follows.  
In Section~\ref{sec:KO} we present a brief introduction to the Koopman operator for nonlinear autonomous systems.   
In Section~\ref{sec:resolvent} we introduce the Koopman resolvent as the key notion in this paper. 
In Section~\ref{sec:expansion} we state the main contribution of this paper and provide the expansion formulae of the Koopman resolvent for the systems with three types of nonlinear dynamics. 
A computational aspect of the obtained Laplace transforms for the nonlinear dynamics is discussed in Section~\ref{sec:comp}. 
Concluding remarks are given in Section~\ref{sec:outro}.

\emph{Notation}---All vectors are viewed as columns. 
We denote the sets of all natural numbers, of integers, and of non-negative integers by $\bN$, $\bZ$, and $\bN_0$, respectively. 
The sets of all real numbers and of all complex numbers are also denoted by $\bR$ and $\bC$.  
The spaces of square-summable functions, of analytic functions, and of functions with $k$ continuous derivatives are denoted by $\sL^2$, $\sC^\omega$, and $\sC^k$.  
For a vector $\x$, $\x^\top$ stands for its transpose. 
The imaginary unit is denoted by $\ii:=\sqrt{-1}$.

\section{The Koopman Operator and its Generator}
\label{sec:KO}

Throughout this paper, we consider a continuous-time, autonomous dynamical system described by the following nonlinear ordinary differential equation:
\begin{equation}
\dot{\x}=\F(\x) \qquad \forall\x\in\bR^n
\label{eqn:syst}
\end{equation}
where $\x$ is the state of the system, and $\F: \bR^n\to\bR^n$ is a nonlinear vector-valued function.  
We make the standing assumption that the solution
\[
\x(t)=\bS^t(\x_0) \qquad \forall t\geq 0
\]
associated with the initial condition $\x_0$ exists and is unique.
For instance, existence of the solution for all positive times can be obtained when the vector field is globally Lipschitz on a forward invariant set.  
The one-parameter family of maps $\bS^t: \bR^n\to\bR^n$, $t\geq 0$ is the \emph{flow} generated by \eqref{eqn:syst}. 
The system is usually described through its positive semi-orbit for the state, $\{\bS^t(\x_0)\}_{t\geq 0}$. 
Alternatively, one can focus on an output function $f: \bR^n\to\bC$, called \emph{observable}, and its positive semi-orbit $\{f(\bS^t(\x_0))\}_{t\geq 0}$. 
The evolution of all observables is represented by a one-parameter family of \emph{Koopman operators} for \eqref{eqn:syst}. 
\begin{definition}
\label{def:U}
Consider a (Banach) space $\sF$ of observables $f: \bR^n\to\bC$.  
The family of Koopman operators ${U}^t: \sF\to\sF$ associated with the family of maps $\bS^t:\bR^n\to\bR^n$, $t\geq 0$ is defined through the composition
\[
{U}^tf=f\circ\bS^t \qquad \forall f\in\sF. 
\]
\end{definition}

From the above argument, $\{\bS^t\}_{t\geq 0}$ is a semigroup, which implies that $\{{U}^t\}_{t\geq 0}$ is a semigroup of Koopman operators, called \emph{the Koopman semigroup}.  
It is important to note that the Koopman semigroup is \emph{linear} in the sense that each ${U}^t$ is linear. 
This allows to investigate the dynamics described by the nonlinear system \eqref{eqn:syst} through a linear semigroup. 
Note that due to existence and uniqueness of solutions of \eqref{eqn:syst}, $\{{U}^t\}_{t\in\bR}$ is equipped with the \emph{group} property. 
In the following we will keep the semigroup terminology to refer to semigroup theory of linear evolution equations \cite{Engel:2000,Kato:1995}. 
Also note that the Koopman semigroup is not always unitary for a wide class of systems as shown in this paper. 

If the semigroup $\{\bS^t\}_{t\geq 0}$ is continuously differentiable, then the Koopman semigroup $\{{U}^t\}_{t\geq 0}$ is strongly continuous in well-chosen spaces of observables (e.g. $\sC^0(\bR^2)$, $\sL^2(\bR^n)$), that is
\[
\lim_{t\downarrow 0}\|{U}^tf-f\|=0 \qquad \forall f \in \sF
\]
where $||\cdot||$ stands for the norm defined in $\sF$. 
This is stated in Proposition~16.2 (for $\sC^0$) and Exercise~16.5.4 (for $\sL^2$) in \cite{Batkai:2017} or Section~III-C 
(for $\sL^2$) 
of \cite{Mauroy_IEEETAC65}. 
For a strongly continuous semigroup $\{{U}^t\}_{t\geq 0}$, the limit
\[
\lim_{t\downarrow 0}\frac{{U}^tf-f}{t} =: {L}f \qquad \forall f\in\sD 
\]
exists in the strong sense, where the domain $\sD$ of ${L}$ is a dense set in $\sF$. 
This defines the infinitesimal generator ${L}: \sD\to\sF$ of the Koopman semigroup, which is called the \emph{Koopman generator} and determines the time derivative of observables along the flow. 
The Koopman generator ${L}$ associated with \eqref{eqn:syst} is given by
\[
({L} f)(\x)=\F(\x)^\top{\bf \nabla}f(\x)
\]
where ${\bf \nabla}$ stands for the gradient operator in $\bR^n$.  
This explicitly connects the Koopman operator framework to the underlying system.

\section{Resolvent of the Koopman Generator}
\label{sec:resolvent}

Consider the resolvent set and spectrum of the Koopman generator ${L}$ for the nonlinear system \eqref{eqn:syst}. 
The generator ${L}$ is generally not bounded (and therefore not a continuous operator).  
The \emph{resolvent set} and \emph{spectrum} of a linear operator acting on a Banach space are defined (see, e.g., \cite{Curtain:1995,Engel:2000}) 
as follows.
\begin{definition}
The resolvent set $\rho({L})$ of the Koopman generator ${L}: \sD\to\sF$ is the set of complex values $\lambda$ such that the algebraic inverse of the operator $\lambda{I}-{L}$, denoted as $(\lambda{I}-{L})^{-1}$, exists and is a bounded linear operator on a dense domain of $\sF$, where ${I}$ is the identity operator.
\end{definition}
\begin{definition}
The spectrum $\sigma({L})$ of the Koopman generator ${L}: \sD\to\sF$ is defined to be
\[
\sigma({L}):=\bC\setminus\rho({L}).
\]
\end{definition}

For ${s}\in\rho({L})$, the bounded linear operator 
$({s}{I}-{L})^{-1}$ exists by definition. 
The \emph{resolvent} of a linear operator is stated as follows.
\begin{definition}
Suppose that ${\rho}({L})$ is not empty. 
The resolvent (operator) of the Koopman generator ${L}: \sD\to\sF$, which we call the Koopman resolvent, is defined as 
\begin{equation}
{R} ({s}; {L}):=({s}{I}-{L})^{-1} \qquad \forall{s}\in{\rho}({L}).
\end{equation}
\end{definition}

The spectrum $\sigma({L})$ is decomposed into the three disjoint sets: point, continuous, and residual spectra {\cite{Curtain:1995}}. 
Regarding the point spectrum {where $\lambda{I}-{L}$ is not injective}, a \emph{Koopman eigenvalue} of ${L}$ is a complex number $\lambda$ such that there exists a non-zero $\phi\in\sD\subset\sF$, called a \emph{Koopman eigenfunction}, such that ${L}\phi=\lambda\phi$. 
If the cardinality of Koopman eigenvalues is countably infinite, we use the integer subscript $j$ to represent a pair of Koopman eigenvalue and Koopman eigenfunction:
\[
{L}\phi_j=\lambda_j\phi_j, \qquad j=1,2,\ldots
\]
and equivalently, from the spectral mapping theorem \cite{Engel:2000}, 
\[
{U}^t\phi_j=\ee^{\lambda_jt} \phi_j \qquad \forall t\geq 0.
\]
{Note that the eigenfunctions are usually not mutually orthogonal.}  
A {complex value $\lambda\in\sigma({L})$} is in the continuous spectrum if {$\lambda{I}-{L}$ is injective, its range is dense in $\sF$, but $(\lambda{I}-{L})^{-1}$ is unbounded \cite{Curtain:1995}}. 
The residual spectrum does not appear in this paper and is not considered below. 

The key result used in this paper is from \cite{Engel:2000} and allows to connect the Koopman semigroup $\{{U}^t\}_{t\geq 0}$, the Koopman generator  ${L}$, and the associated Koopman resolvent ${R} ({s}; {L})$. 
In Proposition~5.5 of \cite{Engel:2000} for the strongly continuous $\{{U}^t\}_{t\geq 0}$ there exist constants $w\in\mathbb{R}$ and $M\geq 1$ such that
\[
\|U^t\| \leq M\ee^{wt} \qquad \forall t\geq 0
\]
where $\|{U}^t\|$ is the operator norm. 
Then, the following \emph{integral representation} of the Koopman resolvent ${R} ({s}; {L})$ holds (see Theorem 1.10 of \cite{Engel:2000}):
\begin{equation}
{R} ({s}; {L})f=\int^{\infty}_{0} \ee^{-{s} t}{U}^tf \, \dd t \qquad \forall f\in{\sF}
\label{eqn:Laplace}
\end{equation}
where the unilateral \emph{Laplace transform} on the right-hand side exists if $\mathrm{Re}[s]>w$.  
Note that for $s\notin {\rho}({L})$ a formula similar to \eqref{eqn:Laplace} allows to define Koopman eigenfunctions, which is called the generalized Laplace transform in \cite{Mezic_ARFM45}.

Next, consider the orbit $\{f(\bS^t(\x_0))\}_{t\geq 0}$ with initial condition $\x_0$ of the nonlinear system \eqref{eqn:syst}.  
The orbit is regarded as a scalar-valued signal of output or measurement from \eqref{eqn:syst} such as 
\[
y(t)=f(\bS^t(\x_0))=({U}^t f)(\x_0), \qquad \forall t\geq 0.
\]
If the space $\sF$ is a reproducing kernel Hilbert space, one can show that the unilateral Laplace transform of $\{y(t)\}_{t\geq 0}$ exists, which we denote by $Y({s}; \x_0)$. Indeed, evaluation functionals are bounded (i.e. there exists $C$ such that $|f(x)|\leq C \|f\|$ for all $f\in\sF$) and we have
\[
\int^\infty_0|y(t)\ee^{-st}|\dd t
= \int^\infty_0|({U}^tf)(\x_0)\ee^{-st}|\dd t
\leq C \int^\infty_0\|U^t f\|\, |\ee^{-st}|\dd t 
\leq C M \, \|f\| \int^\infty_0|\ee^{-(s-w)t}|\dd t
\]
so that the integral is absolutely convergent for $\mathrm{Re}[s]>w$.
Therefore, the unilateral Laplace transform $Y({s}; \x_0)$ is well-defined (see Section~II-3 of \cite{Widder:1946}).
In this case, the Laplace transform is directly related to \eqref{eqn:Laplace} as follows:
\begin{align}
Y({s}; \x_0)\, 
:= \int^\infty_0 \ee^{-st} y(t) \, \dd t 
= \int^\infty_{0} \ee^{-st} ({U}^t f)(\x_0) \, \dd t 
= [{R} ({s}; {L})f](\x_0) 
\label{eqn:KEY}
\end{align}
provided that $\mathrm{Re}[s]>w$. Note the improper integral in \eqref{eqn:Laplace} converges strongly, but not necessarily pointwise. 
Therefore, the last equality in \eqref{eqn:KEY} does not hold in general spaces, but holds in reproducing kernel Hilbert spaces where strong convergence implies pointwise convergence. 
Formula \eqref{eqn:KEY} shows that the Laplace transform $Y({s}; \x_0)$ of the output is determined by the action of the resolvent ${R} ({s}; {L})$ onto $f$, which is a generalization of the classical approach to linear systems as shown in Table\,\ref{tab:idea}.  

\begin{remark}
The above derivation provides a {dynamical interpretation of the Fourier spectra of nonlinear causal signals.  
For the Laplace transform} \eqref{eqn:KEY} { which exists as discussed above, it follows from Theorem~7.3 of \cite{Widder:1946} that if the output $\{y(t)\}_{t\geq 0}$ is of bounded variation in a neighborhood of $t\geq 0$ and continuous in $t$, then it can be} represented via inverse Laplace transform as
\begin{align}
\label{eq:Laplace_expansion}
y(t)=\int_{c-\ii \infty}^{c+\ii\infty} 
\ee^{st} \, [{R}({s}; {L})f](\x_0) \, \frac{\dd s}{2\pi\ii} \qquad \forall t> 0 
\end{align}
where {$c>w$, and the integral path is the vertical line from $c-\ii\infty$ to $c+\ii\infty$ in} the complex plane. 
This path is chosen so that {it lies on the right of the spectrum the Koopman generator ${L}$. 
Let us consider the condition $w<0$, so that
\[
\int^\infty_0|y(t)|\dd t
=\int^\infty_0|({U}^tf)(\x_0)|\dd t
\leq C\int^\infty_0\|U^t f\|  \dd t 
\leq C M \, \|f\| \int^\infty_0|\ee^{wt}|\dd t
\]
exists (where we used again the reproducing kernel Hilbert space property). 
This is well-known as a condition of absolute convergence of the Fourier integral of a causal signal ($y(t)=0$ for $t<0$).
Then, by setting $c=0$ and $s=\ii\omega$ in \eqref{eq:Laplace_expansion}, the output can be represented via the Fourier transform}
\begin{align}
y(t) = \int^\infty_{-\infty} \ee^{\ii \omega t}
\underbrace{\left[\int^\infty_0 \ee^{\ii\omega \tau} y(\tau) \, \dd\tau\right]}_{Y({\ii\omega}; \x_0)}
\frac{\dd\omega}{2\pi} 
= \int^\infty_{-\infty} \ee^{\ii \omega t}
[{R}({\ii\omega}; {L})f](\x_0) \, \frac{\dd\omega}{2\pi} \qquad \forall t{>0}. 
\end{align}
It turns out that the Fourier spectrum of $y(t)$ corresponds to the value of $[{R}({\ii \omega}; {L})f](\x_0)$ along the imaginary axis $\ii\mathbb{R}$. 
\end{remark}

\section{Expansion Formulae of the Koopman Resolvent}
\label{sec:expansion}

In this section, we provide formulae of spectral expansions of the Koopman resolvent for three types of nonlinear dynamics described by \eqref{eqn:syst}.

\subsection{On-Attractor Evolution}
\label{subsec:case-1}

First, we consider the case where the system \eqref{eqn:syst} possesses a compact attractor $\cA$ on which the dynamics is ergodic.  
In an ergodic attractor, almost all points are accessible in the sense that the initial conditions in this attractor sample the set. 
{This is characterized by the existence of an invariant measure on $\cA$.}  
Examples of ergodic attractors include stable limit cycles and quasi-periodic cycles (invariant tori). 
{Our analysis follows the traditional theory of ergodic dynamics in terms of the Koopman operator \cite{Arnold:1968}.  
One role of considering such ergodic invariant sets is that they are dynamically minimal in the indistinguishable sense \cite{Arnold:1968}.} 
By focusing on the asymptotic dynamics on $\cA$, the Koopman semigroup restricted to $\cA$, denoted by ${U}^t_\cA$ (with its generator ${L}_\cA$), is \emph{unitary} in the space $\sF=\sL^2(\cA)$, see {
\cite{Koopman_PNAS17} or Theorem~9.3 of \cite{Arnold:1968}}. 
This kind of restriction of ${U}^t$ and ${L}$ is used for global stability analysis of nonlinear autonomous dynamical systems \cite{Alex_CDC13}.
{The $\sL^2$ space is defined with respect to the invariant measure.}  
Note that it is well-known that the semigroup ${U}^t_\cA$ is strongly continuous. This follows from the fact that the semigroup is bounded and strongly continuous on the dense subspace of continuous functions on $\cA$ (see Proposition I.5.3(c) of \cite{Engel:2000}). 
Then, the complete spectral expansion of ${U}^t_\cA$ is presented as follows:
\begin{equation}
{U}^t_\cA = \sum^\infty_{j=1} \ee^{{\ii\omega_j} t}{P}_{j} 
+ \int^{\infty}_{-\infty} 
\ee^{\ii {\omega} t} \, \dd{E}({\omega})\qquad \forall t\geq 0{.}
\label{eqn:on-attractor}
\end{equation}
{See Section~\ref{sec:AppA} for its derivation and Section~3.4 of} \cite{Mezic_ARFM45} for the case of Koopman group.\footnote{In \cite{Mezic_ARFM45}, the upper limit on the first term is $k$, that can be finite or infinite and the integral term has a typo in limits, where integral between $0$ and $1$, as in the discrete time case, is replaced by integral between $-\infty$ and $\infty$.} 
The first term of the right-hand side in \eqref{eqn:on-attractor} represents the contribution of the point spectrum of ${L}_\cA${. 
The eigenvalue $\ii\omega_j$ is purely imaginary from the unitary ${U}^t_\cA$ and simple from the ergodicity assumption (see Theorem~9.21 of \cite{Arnold:1968}), and} ${P}_j$ is the {associated} orthogonal projection operator onto the eigenspace of ${L}_\cA$ associated with the Koopman eigenfunction $\phi_j$: for $f\in\sL^2(\cA)$,
\begin{equation}
{P}_jf=\phi_jV_j,\qquad V_j:=\bracket{f,\phi_j}_{\sL^2}
\label{eqn:projection}
\end{equation}
where $\bracket{\cdot,\cdot}_{\sL^2}$ stands for the inner product defined in $\sL^2(\cA)$, and 
$V_j$ is coined in \cite{Rowley_JFM641} as the \emph{Koopman mode}. 
{The second term of the right-hand side in \eqref{eqn:on-attractor} represents the contribution of the continuous spectrum of ${L}_\cA$, where ${E}$ is a continuous spectral measure on $\sL^2(\cA)$ (see Appendix in detail).
It is shown in \cite{Mezic_ARFM45} that in time-domain viewpoint, the first term (i.e., point spectrum of ${L}_\cA$) on the right-hand side in \eqref{eqn:on-attractor} governs the almost periodic part embedded in the output $y(t)=({U}^t_\cA f)(x)$, while the second term (i.e., continuous spectrum) does the aperiodic part in it.
}

Now, we derive the expansion formula of the Koopman resolvent ${R} ({s}; {L}_\cA)$ for the on-attractor evolution.
\begin{theorem}
Consider the system \eqref{eqn:syst} with a compact ergodic attractor $\cA$ and asymptotic dynamics on it.  
For the associated Koopman generator ${L}_\cA: \sD\to \sF=\sL^2(\cA)$, the following spectral expansion of the Koopman resolvent ${R} ({s}; {L}_\cA)$ holds:
\begin{equation}
{R} ({s}; {L}_\cA)=\sum^\infty_{j=1}\frac{{P}_j}{{s}-{\ii\omega_j}
}
+ \int^\infty_{-\infty} 
\frac{\dd{E}({\omega}
)}{{s}-\ii {\omega}
}.
\label{eqn:expansion1}
\end{equation}
The Region-Of-Convergence (ROC) of ${R} ({s}; {L}_\cA)$ {is} 
$\{{s}\in\bC : \mathrm{Re}[{s}]>0\}$.
\end{theorem}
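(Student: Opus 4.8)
The plan is to substitute the complete spectral expansion \eqref{eqn:on-attractor} of the unitary semigroup ${U}^t_\cA$ into the integral representation \eqref{eqn:Laplace} of the resolvent and then evaluate the resulting time integrals termwise. The crucial simplification is that unitarity of ${U}^t_\cA$ on $\sL^2(\cA)$ forces $\|{U}^t_\cA\|=1$ for all $t\geq 0$, so one may take $M=1$ and $w=0$ in the bound $\|{U}^t_\cA\|\leq M\ee^{wt}$. Hence the representation
\[
{R}({s}; {L}_\cA)f=\int_0^\infty \ee^{-{s}t}\,{U}^t_\cA f\,\dd t
\]
is valid on the half-plane $\mathrm{Re}[{s}]>0$, which already pins down the candidate ROC.

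At the heart of the computation lies the elementary scalar identity, valid for every real $\omega$,
\[
\int_0^\infty \ee^{-{s}t}\,\ee^{\ii\omega t}\,\dd t=\frac{1}{{s}-\ii\omega},
\]
whose integrand has modulus $\ee^{-\mathrm{Re}[{s}]t}$ and which therefore converges precisely when $\mathrm{Re}[{s}]>0$. Applying it to each atom $\ee^{\ii\omega_j t}{P}_j$ of the point spectrum produces the term ${P}_j/({s}-\ii\omega_j)$, and applying it under the spectral integral $\int \ee^{\ii\omega t}\,\dd{E}(\omega)$ produces $\int \dd{E}(\omega)/({s}-\ii\omega)$, which together reconstruct the claimed expansion \eqref{eqn:expansion1}.

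The main obstacle is to justify interchanging the time integral $\int_0^\infty \ee^{-{s}t}(\cdot)\,\dd t$ with the infinite sum over the point spectrum and with the operator-valued integral against $\dd{E}$. For the point-spectrum series I would exploit orthogonality of the projections, $\sum_j\|{P}_jf\|^2\leq\|f\|^2$, together with the uniform lower bound $|{s}-\ii\omega_j|\geq\mathrm{Re}[{s}]$ (since $\mathrm{Re}[{s}-\ii\omega_j]=\mathrm{Re}[{s}]$), which gives
\[
\Big\|\sum_j\frac{{P}_jf}{{s}-\ii\omega_j}\Big\|^2=\sum_j\frac{\|{P}_jf\|^2}{|{s}-\ii\omega_j|^2}\leq\frac{\|f\|^2}{(\mathrm{Re}[{s}])^2};
\]
the series thus converges in $\sL^2(\cA)$ and the termwise Laplace transform is legitimate. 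For the continuous part I would test against an arbitrary $h\in\sL^2(\cA)$ and reduce to scalar Fubini: the complex measure $\langle {E}(\cdot)f,h\rangle$ has total variation at most $\|f\|\,\|h\|$, so
\[
\int_0^\infty\!\!\int_{-\infty}^{\infty}\big|\ee^{-{s}t}\ee^{\ii\omega t}\big|\,\dd\big|\langle {E}(\omega)f,h\rangle\big|\,\dd t\leq \|f\|\,\|h\|\int_0^\infty \ee^{-\mathrm{Re}[{s}]t}\,\dd t=\frac{\|f\|\,\|h\|}{\mathrm{Re}[{s}]}<\infty
\]
for $\mathrm{Re}[{s}]>0$; Fubini then exchanges the two integrals, and removing the arbitrary $h$ via the Riesz representation yields the term $\int\dd{E}(\omega)f/({s}-\ii\omega)$. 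Assembling the two contributions establishes \eqref{eqn:expansion1} as a strongly convergent identity of bounded operators throughout $\mathrm{Re}[{s}]>0$.

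Finally, for the ROC I would argue sharpness: the integral $\int_0^\infty \ee^{-\mathrm{Re}[{s}]t}\,\dd t$ diverges as soon as $\mathrm{Re}[{s}]\leq 0$, and since the spectrum of ${L}_\cA$ sits entirely on the imaginary axis $\ii\bR$ (the eigenvalues $\ii\omega_j$ are purely imaginary by unitarity and the continuous spectrum is supported on $\ii\bR$) the integrand carries no decay to compensate. Hence the defining Laplace integral \eqref{eqn:Laplace} converges exactly on $\{{s}\in\bC:\mathrm{Re}[{s}]>0\}$, which is the stated ROC.
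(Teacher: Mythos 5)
Your proposal is correct and follows essentially the same route as the paper: substitute the spectral expansion \eqref{eqn:on-attractor} into the integral representation \eqref{eqn:Laplace}, use unitarity and orthogonality of the spectral projections to obtain the uniform bound $\ee^{-\mathrm{Re}[s]t}\|f\|$, interchange the time integral with the sum and the spectral integral, and evaluate the elementary scalar integral $\int_0^\infty \ee^{(\ii\omega-s)t}\,\dd t$. The only cosmetic difference is that you justify the interchange for the continuous part weakly (testing against $h$ and applying scalar Fubini to the complex measure $\langle E(\cdot)f,h\rangle$) whereas the paper applies vector-valued dominated convergence to truncations $\int_{-R}^{R}$; both are standard and equivalent here.
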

\begin{proof}
Applying \eqref{eqn:Laplace} to \eqref{eqn:on-attractor}, we obtain 
\begin{equation}
{R} ({s}; {L}_\cA) f 
= \int^\infty_{0} \ee^{-st}{U}^t_\cA f \, \dd t 
= \int^\infty_{0} \left[ \sum^\infty_{j=1} \ee^{({\ii\omega_j}
-s) t} {P}_j f
+ \int^{\infty}_{-\infty}
\ee^{(\ii {\omega}
- s) t} \, \dd{E}({\omega}
) f  \right]  \dd t 
\label{eqn:der2}
\end{equation}
for all $f \in \sL^2(\cA)$. 
{On the first term in the right-hand side of \eqref{eqn:der2}, we} 
have that, for some finite $N$,
\begin{align}
\left\|\sum^N_{j=1} \ee^{({\ii\omega_j}
-s) t} {P}_j f \right\| 
&{= \ee^{-\mathrm{Re}[s]t}\left\|\sum^N_{j=1} \ee^{\ii\omega_j 
t} {P}_j f \right\|} \nonumber\\
&{\leq} 
\ee^{-\mathrm{Re}[s]t} \left\|\sum^\infty_{j=1} \ee^{{\ii\omega_j}
t} {P}_j f \right\| 
{\leq} \ee^{-\mathrm{Re}[s]t} \left\| U^t_\cA f \right\| 
= \ee^{-\mathrm{Re}[s]t}\|f\| \nonumber
\end{align}
where we used the facts that $P_j$ are orthogonal projections and that ${U}^t_\cA$ is unitary. 
Since $\ee^{-\mathrm{Re}[s]t}$ is integrable in $t\in[0,\infty)$ and the finite sum $ \sum^N_{j=1} \ee^{({\ii\omega_j}
-s) t} {P}_j f$ is convergent as $N\to\infty$ if $\mathrm{Re}[s]>0${. 
Also, on the second term in the right-hand side of \eqref{eqn:der2}, we have that, for some finite $R$, 
\begin{equation*}
\left\|\int^R_{-R}\ee^{(\ii\omega-s)t}\dd E(\omega)f\right\| 
 = \ee^{-\mathrm{Re}[s]t} \left\|\int^R_{-R}\ee^{\ii\omega t}\dd E(\omega)f\right\|
 \leq \ee^{-\mathrm{Re}[s]t}\|U^t_\mathcal{A} f\| = \ee^{-\mathrm{Re}[s]t} \|f\|
\end{equation*}
where we again used the facts that the spectral projections are orthogonal and that ${U}^t_\cA$ is unitary. 
Since $\ee^{-\mathrm{Re}[s]t}$ is integrable in $t\in[0,\infty)$ and the finite integral $\int^R_{-R}\ee^{(\ii\omega-s)t}\dd E(\omega)f$ is convergent as $R\to\infty$ if $\mathrm{Re}[s]>0$. 
Therefore}, we can apply the Lebesgue dominated convergence theorem (see, e.g., {Section~X of} \cite{Rudin:2005}) and obtain
\begin{equation*}
{R} ({s}; {L}_\cA) f 
= \sum^\infty_{j=1} \left[\int^\infty_{0} \ee^{({\ii\omega_j}
-s) t} {P}_j f \, \dd t\right]
+ \int^{\infty}_{-\infty} 
\left[\int^\infty_{0} \ee^{(\ii {\omega} 
- s) t} \, \dd t\right] \dd{E}({\omega}
) f{,}    
\end{equation*}
so that the result follows. 
\end{proof}

{Since $\sL^2(\cA)$ is not a reproducing kernel Hilbert space, \eqref{eqn:KEY} does not hold for all functions $f\in\sL^2(\cA)$. 
One can circumvent this issue by considering the Sobolev space $\mathscr{H}^1(\cA)$ of functions whose derivatives belong to $\sL^2(\cA)$. 
This space is a reproducing kernel Hilbert space so that \eqref{eqn:KEY} holds. 
Following the same reasoning as in the case of $\sL^2(\cA)$ (based on Proposition I.5.3(c) of \cite{Engel:2000}), one can show that ${U}^t_\cA$ is strongly continuous in $\mathscr{H}^1(\cA)$, so that \eqref{eqn:Laplace} holds. 
Moreover, the resolvent coincides with the resolvent computed in $\sL^2(\cA)$ since strong convergence in $\mathscr{H}^1(\cA)$ implies strong convergence in $\sL^2(\cA)$. 
It follows that, for the on-attractor evolution, the unilateral Laplace transform $Y({s}; \x_0)$ of the output $\{y(t)=f(\x(t))\}_{t\geq 0}$ with initial condition $\x_0\in\cA$ and with $f\in \mathscr{H}^1(\cA) \subset \sL^2(\cA)$} is derived from \eqref{eqn:KEY}, \eqref{eqn:projection}, and \eqref{eqn:expansion1} as {follows:}
\begin{equation}
Y({s}; \x_0) = [{R} ({s}; {L}_\cA)f](\x_0) 
= \underbrace{\sum^\infty_{j=1}\frac{\phi_j(\x_0)V_j}{{s}-{\ii\omega_j}
}}_{\displaystyle Y\sub{p}({s})}
+ \underbrace{\left[\int^\infty_{-\infty} 
\frac{\dd{E}({\omega}
)}{{s}-\ii {\omega}
}\right]f(\x_0)}_{\displaystyle Y\sub{c}({s})}. 
\label{eqn:expansion1-2}
\end{equation}
The function $Y\sub{p}({s})$ is associated with the almost periodic part of the {time} evolution {(output) $y(t)$}, 
while $Y\sub{c}({s})$ is associated with its aperiodic part. 
For mixing systems, there is no Koopman eigenfunction except a trivial constant function associated with eigenvalue 0, in which case the dynamics of the system are fully captured by $Y\sub{c}(s)$. 
By the above construction, $Y\sub{p}({s})$ is proved to be convergent{.} 
The residues of $Y\sub{p}({s})$ are the products of the values $\phi_j(\x_0)$ of the Koopman eigenfunctions at $\x_0$ with the Koopman modes $V_j$, which corresponds to the projection value $(P_j f)(\x_0)$ (see \eqref{eqn:projection}).

The characterization of $Y\sub{p}({s})$, which stems from the point spectrum of ${L}_\cA$, mirrors the classic characterization of linear autonomous systems via the eigenvalues of the system matrix $\A$ in Table\,\ref{tab:idea}.\footnote{It might be better to paraphrase that $Y\sub{p}({s})$ is analogue to the Laplace transform of the impulse response of a linear time-invariant system with single input and single output.}  
The transform \eqref{eqn:expansion1-2} suggests that the characterization of dynamics in Laplace domain is possible for the nonlinear system \eqref{eqn:syst}, even for a deterministic nonlinear system with aperiodic evolution. 

\begin{remark}
\label{rem:spec_expan}
In the case of isolated eigenvalues, the connection between the residues of $Y\sub{p}(s)$ and the eigenfunctions can also be obtained from Cauchy integral's formula. 
If $\lambda_j$ is isolated, the spectral projection is given by
\[
(P_jf)(\x_0)=\oint_{\gamma_j}[{R} ({s}; {L}_\cA)f](\x_0)\frac{\dd{s}}{2\pi\ii},
\]
where $\gamma_j$ is a small closed curve around $\lambda_j$, and is equal to the residue of $[{R} ({s}; {L}_\cA)f](\x_0)=Y({s}; \x_0)$ at $s=\lambda_j${, i.e. $(P_j f)(\x_0)=\mathrm{Res}_{\lambda_j} [{R} ({s}; {L}_\cA)f](\x_0)$.
Since the resolvent operator can be interpreted in terms of a Laplace transform (see \eqref{eqn:KEY}), this could yield a new interpretation of Generalized Laplace Analysis (GLA), which is typically used to compute the spectral projections $P_j f$ and relies on (Laplace) time averages (see, e.g.,  \cite{Marko_CHAOS22,Mauroy_PD261,Mezic_ARFM45}).}
Similarly, if the spectrum consists of isolated points, {it follows from the residue theorem that the integral} \eqref{eq:Laplace_expansion} can be expressed in terms of the residues of $[{R}({s}; {L}_\cA)f](\x_0)$, which directly leads to the spectral expansion {of the Koopman semigroup
\[
y(t)=({U}^tf)(\x_0)=\sum_{j} \ee^{\lambda_j t} \, \underset{\lambda_j}{\rm{Res}} [{R} ({s}; {L}_\cA)f](\x_0) .
\]
For this, the associated Laplace transform $Y\sub{p}(s)$ is meromorphic in the ROC (an open set) so that it can be written in the form $Y\sub{p}({s})=N\sub{p}({s})/D\sub{p}({s})$, where $N\sub{p}({s})$ and $D\sub{p}({s})$ are holomorphic in the ROC.
The set of all isolated, simple poles in $Y\sub{p}({s})$ (namely, all zeros of $D\sub{p}(s)$) is included in the point spectrum of the Koopman generator ${L}_\cA$.}

However, the residue theorem is typically derived for a finite or countable number of isolated poles. 
This is not always true in Koopman operator case (it can be true on discrete state spaces, where Koopman operator is a matrix: see \cite{mezic2019spectral,mezic_book}). 
Spectral expansions in this section have been obtained using {ergodic} 
nature of the dynamics {as studied through} 
\cite{Mezic_JNS2019}, and thus \eqref{eq:Laplace_expansion} is related to spectral expansions in this more general setting. 
While the residue theorem is valid for functions that are holomorphic outside of the finite set of poles, the equivalence of \eqref{eq:Laplace_expansion} and spectral expansion is valid for functions in spaces defined in \cite{Mezic_JNS2019}: see Theorem~6.4 in Section~6. 
For instance, suppose we have a quasi-periodic torus as an attractor, with incommensurate basic frequencies $\omega_1$ and $\omega_2$. Then, $k_1\omega_1+k_2\omega_2$ can get arbitrarily close to zero for some $k_1,k_2 \in \mathbb{Z}$ and therefore $0$ is not an isolated singularity. 
However, as shown in {Section~9 of} \cite{Mezic_JNS2019}, the spectral expansion holds under KAM conditions for $\omega_1,\omega_2$. 
{This suggests that $Y\sub{p}(s)$ is generally not meromorphic for the nonlinear system \eqref{eqn:syst} unlike the LTI system with a linear observable.}
This 
provides a connection of residue theorem to dynamical systems theory. 
\end{remark}

\subsection{Off-Attractor Evolution to Stable Equilibrium Point}
\label{subsec:case-2}

\subsubsection{Linear System}
\label{sssec:Linear}
We start the discussion with a linear stable system described by
\begin{equation}
\dot{\x}=\A\x \qquad \forall\x\in\mathbb{R}^n
\label{eqn:StableLinear}
\end{equation}
where $\A\in\mathbb{R}^{n\times n}$ is a constant matrix and assumed to have distinct $n$ eigenvalues $\lambda_1,\ldots,\lambda_n$ sorted according to
\[
\mathrm{Re}[\lambda_{j+1}]\leq
\mathrm{Re}[\lambda_j]\leq
\mathrm{Re}[\lambda_1]<0,\quad j=2,\ldots,n-1.
\]
For this system, the principal Koopman eigenfunctions $\phi_j(\x)$ associated with the Koopman eigenvalues $\lambda_j$ for $j=1,\ldots,n$ are explicitly constructed as (see \cite{Rowley_JFM641} and {Section~3.2 of} \cite{Mezic_JNS2019} for the case of generalized eigenfunctions with repeated eigenvalues)
\[
\phi_j(\x) = \w_j^\top\x
\]
where ${\bf v}_k$ are the right eigenvectors of $\A$ and $\w_j$ are the left eigenvectors of $\A$ (that is, $\w_j^\top\A=\lambda_j\w_j^\top$), normalized so that $\w_j^\top{\bf v}_k=\delta_{ik}$ (Kronecker's delta).  
Thus, since for any $\x$, we have
\[
\x = \sum^{n}_{j=1}\w^\top_j\x{\bf v}_j=\sum^{n}_{j=1}\phi_j(\x){\bf v}_j,
\]
the time evolution of a linear observable $y(t)=f(\x)={\bf c}^\top\x(t)$ (where ${\bf c}\in\mathbb{R}^{n}$, $t\geq 0$) along the trajectory of \eqref{eqn:StableLinear} starting from $\x=\x_0$ is represented through the Koopman operator ${U}^t$ by
\[
y(t) = ({U}^tf)(\x_0)
= \sum^{n}_{j=1}\ee^{\lambda_jt}\phi_j(\x_0) \, {\bf c}^\top{\bf v}_j \qquad \forall t\geq 0
\]
where ${\bf c}^\top{\bf v}_j$ ($j=1,\ldots,n$) coincide with the Koopman modes $V_j$.
{Note that the Koopman semigroup is naturally defined in the (Segal-Bargmann) space of analytic functions (see Section \ref{subsec:nonlinear_equil} for more details).}
Here, by applying the unilateral Laplace transform, it is possible to derive the Laplace transform $Y(s; \x_0)$ of the linear output $y(t)$ as
\[
Y(s; \x_0) = \sum^{n}_{j=1}\frac{\phi_j(\x_0)V_j}{s-\lambda_j}.
\]
This clearly shows that the poles of the Laplace transform are the principal Koopman eigenvalues of the stable linear system. 
Moreover, the residues correspond to the product of the Koopman modes $V_j$ with the values of the Koopman eigenfunctions at $\x_0$ . 
This observation will be generalized into the nonlinear system below.

\subsubsection{Nonlinear System}
\label{subsec:nonlinear_equil}

Next, we consider the nonlinear system \eqref{eqn:syst} possessing a stable equilibrium point $\x^\ast$ with a basin of attraction $\cB(\x^\ast)$.  
It is shown in \cite{Chiang_IEEETAC33} that $\cB(\x^\ast)$ is an open, invariant set which is diffeomorphic to $\bR^n$.  
Following \cite{Mauroy_PD261}, we assume that $\F$ is analytic ($\F \in \sC^\omega(\mathbb{R}^n)$) and that the Jacobian matrix computed at $\x^\ast$ has $n$ distinct (non-resonant) eigenvalues $\lambda_j$, characterized by strictly negative real parts and sorted according to
\begin{equation}
\mathrm{Re}[\lambda_{j+1}]\leq
\mathrm{Re}[\lambda_j]\leq
\mathrm{Re}[\lambda_1]<0,\quad j=2,\ldots,n-1.
\label{eqn:EigenValues}
\end{equation}
{The stability condition allows to obtain an ROC in the half-plane, which corresponds to a causal LTI system.
It should be also noted that the non-resonant condition is required by the Poincar\'e linearization theorem 
to ensure that the eigenfunctions are analytic in some neighborhood of the equilibrium. 
This condition is necessary to obtain the spectral expansion below. }

Now, denote the Koopman semigroup restricted to $\cB(\x^\ast)$ by $\{{U}^t_\cB\}_{t\geq 0}$ (and generator ${L}_\cB$). 
{Note that the space of functions on which the semigroup is defined is given in the next paragraph.}
It is shown that the eigenvalues $\lambda_j$ {($j=1,\dots,n$)} are also the eigenvalues of {${L}_\cB$, which are called principal eigenvalues \cite{Ryan_Preprint:2014}.} 
The associated eigenfunctions $\phi_j$ {($j=1,\dots,n$)} are smooth { over $\cB(\x^\ast)$} and analytic in some neighborhood of the equilibrium {point $\x^\ast$} (see, e.g., \cite{Mauroy_PD261}). 
Moreover, using the Koopman eigenfunctions, we can define the conjugacy
\[
\h:\cB(\x^\ast) \to \mathbb{C}^n\,, \qquad \h(\x) = (\phi_1(\x),\dots,\phi_n(\x))^\top
\]
such that
\[
\h \circ \bS^t = \ee^{\mathbf{D} t} \circ \h
\]
where $\mathbf{D}:={\rm diag}(\lambda_1,\ldots,\lambda_n)$ is a diagonal matrix (see, e.g.,  \cite{Lan_PD242,Ryan_Preprint:2014}) {consisting of the principal Koopman eigenvalues}. 

{It is well-known that functions of the form $\{\phi_1(\x)\}^{k_1}\cdots\{\phi_n(\x)\}^{k_n}$, $k_1,\dots,k_n \in \mathbb{N}$, are also Koopman eigenfunctions, associated with the eigenvalues $k_1 \lambda_1+\cdots+k_n \lambda_n$ \cite{Mezic_ARFM45}. 
It is therefore natural to consider a space of functions that admit an expansion on these eigenfunctions, i.e. a space of functions that admit a polynomial decomposition in the new variables $\h$. 
Following Section~6 of \cite{Mezic_JNS2019}, an appropriate space is} the modulated Segal-Bargmann space $\mathcal{S}_{\h}$ of functions {that are analytic (entire) in $\h$,} i.e. of the form $f=g \circ \h$, where $g: \bC^n \to \bC$ is an analytic (entire) function. 
This space is endowed with the norm {and inner product:}
\begin{align}
\|f\|_{\mathcal{S}_{\h}} =\|g \circ \h\|_{\mathcal{S}_{\h}} & = \frac{1}{\pi^n} \int_{\bC^n} |g({\z}
)|^2 \ee^{-|{\z}
|^2}\dd{\z}
< \infty \,, \nonumber\\ 
\langle{f_1,f_2}\rangle 
& = \frac{1}{\pi^n}\int_{\bC^n} 
g_1(\z)\overline{g_2(\z)}\ee^{-|\z|^2}\dd\z \,, \qquad f_i=g_i\circ\mathbf{h} \,. \nonumber
\end{align}
Note that one could alternatively consider a (modulated) Hardy space to take into account functions that are analytic only in some neighborhood of the equilibrium \cite{Ryan_Preprint:2014}. 
Consider an analytic observable $f\in\mathcal{S}_{\h}$ which is {expanded in the basis of the Koopman eigenfunctions as} 
\[
f(\x) 
= \sum_{k_1,\ldots,k_n\in\bN}\{\phi_1(\x)\}^{k_1}\cdots\{\phi_n(\x)\}^{k_n}V_{k_1\cdots k_n}
\]
where the constant $V_{k_1\cdots k_n}$ is again the Koopman mode, so that 
\begin{equation}
({P}_{k_1\cdots k_n}f)(\x) 
= \{\phi_1(\x)\}^{k_1}\cdots\{\phi_n(\x)\}^{k_n}V_{k_1\cdots k_n}
\label{eqn:projection2}
\end{equation}
{defines} the (spectral) projection operator ${P}_{k_1\cdots k_n}$ similar to \eqref{eqn:projection}. 
{These projections are orthogonal in the modulated Segal-Bargmann space $\mathcal{S}_{\h}$ (but not in $\sL^2$).}
In fact, monomials in $\h$, i.e. $\{\phi_1(\x)\}^{k_1}\cdots\{\phi_n(\x)\}^{k_n}/\sqrt{k_1!\cdots k_n!}$, form an orthonormal basis in $\mathcal{S}_{\h}$, so that
\begin{equation}
\label{eq:norm_Foch}
\|f\|^2_{\mathcal{S}_{\h}} = \sum_{k_1,\ldots,k_n\in\bN} |V_{k_1\cdots k_n}|^2 k_1!\cdots k_n!\,.
\end{equation}  
The spectral expansion of ${U}^t_\cB$ for $f\in\mathcal{S}_{\h}$ is given as follows:
\begin{equation}
({U}^t_\cB f)(\x) 
= \sum_{k_1,\ldots,k_n\in\bN}\{\phi_1(\x)\}^{k_1}\cdots\{\phi_n(\x)\}^{k_n}V_{k_1\cdots k_n}
\ee^{(k_1\lambda_1+\cdots+k_n\lambda_n)t} \quad\forall t\geq 0.
\label{eqn:off-attractor-EP}
\end{equation}
{The condition $\mathrm{Re}[\lambda_j]<0$ for all $j$ implies that $\|{U}^t_\cB f\|^2_{\mathcal{S}_{\h}} \leq \|f\|^2_{\mathcal{S}_{\h}}$. Since the series \eqref{eqn:off-attractor-EP} is expanded in an orthonormal basis, it is strongly convergent for all $f\in \mathcal{S}_{\h}$ (see also Section~6 of \cite{Mezic_JNS2019}).} 
The series is also point-wise convergent due to the analyticity properties of the eigenfunctions and $f$ {(see \cite{Mauroy_PD261})}. 
{Moreover, the Koopman eigenfunctions form a complete orthonormal basis and the spectrum is totally disconnected. It follows that the Koopman generator satisfy the properties of a Riesz spectral operator, so that the Koopman semigroup $\{{U}^t_\cB\}_{t\geq 0}$ is strongly continuous (Theorem~2.3.5 in \cite{Curtain:1995}).}
{Note that the expansion \eqref{eqn:off-attractor-EP} shows that there is no continuous part of the spectrum.} 

The expansion formula of the Koopman resolvent ${R} ({s}; {L}_\cB)$ is given in the following theorem. 
\begin{theorem}
Consider the system \eqref{eqn:syst} with a stable equilibrium point $\x^\ast$ and dynamics converging to it. 
For the associated Koopman generator ${L}_\cB: \sD\to \mathcal{S}_{\h}$, the following spectral expansion of the Koopman resolvent ${R} ({s}; {L}_\cB)$ holds:
\begin{equation}
{R} ({s}; {L}_\cB)
= \sum_{k_1,\ldots,k_n\in\bN_0} 
\frac{{P}_{k_1\cdots k_n}}{{s}-(k_1\lambda_1+\cdots+k_n\lambda_n)}.
\label{eqn:expansion2}
\end{equation}
Also, the ROC of ${R} ({s}; {L}_\cB)$ {is} 
$\{{s}\in\bC : \mathrm{Re}[{s}]>0\}$.
\end{theorem}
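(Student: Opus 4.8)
The plan is to mirror the proof of the on-attractor theorem, replacing the spectral expansion \eqref{eqn:on-attractor} with the point-spectrum expansion \eqref{eqn:off-attractor-EP}. First I would substitute \eqref{eqn:off-attractor-EP} into the integral representation \eqref{eqn:Laplace}, giving
\[
{R} ({s}; {L}_\cB) f
= \int^\infty_0 \ee^{-st} {U}^t_\cB f \, \dd t
= \int^\infty_0 \ee^{-st} \sum_{k_1,\ldots,k_n\in\bN_0} \ee^{(k_1\lambda_1+\cdots+k_n\lambda_n)t} {P}_{k_1\cdots k_n} f \, \dd t
\]
for $f\in\mathcal{S}_{\h}$ (where the multi-index $k_1=\cdots=k_n=0$ contributes the constant eigenfunction with eigenvalue $0$), with the aim of exchanging the multi-index sum with the integral and evaluating each term in closed form.

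The substance of the argument is licensing this interchange by the Lebesgue dominated convergence theorem. The crucial structural fact, noted just after \eqref{eqn:projection2}, is that the projections ${P}_{k_1\cdots k_n}$ are mutually \emph{orthogonal} in $\mathcal{S}_{\h}$ (though not in $\sL^2$), since the normalized monomials in $\h$ form an orthonormal basis. Hence for any finite index set $\Lambda\subset\bN_0^n$, Parseval's identity \eqref{eq:norm_Foch} together with the bound $\mathrm{Re}[k_1\lambda_1+\cdots+k_n\lambda_n]\leq 0$ (which follows from \eqref{eqn:EigenValues}, so each exponential factor has modulus at most one) gives
\[
\left\| \sum_{(k_1,\ldots,k_n)\in\Lambda} \ee^{(k_1\lambda_1+\cdots+k_n\lambda_n)t} {P}_{k_1\cdots k_n} f \right\|^2
= \sum_{(k_1,\ldots,k_n)\in\Lambda} \left| \ee^{(k_1\lambda_1+\cdots+k_n\lambda_n)t} \right|^2 \left\| {P}_{k_1\cdots k_n} f \right\|^2
\leq \|f\|^2 .
\]
Thus every partial sum has $\mathcal{S}_{\h}$-norm bounded by $\|f\|$ uniformly in $t$, so after multiplying by $\ee^{-st}$ the integrand is dominated by $\ee^{-\mathrm{Re}[s]t}\|f\|$, which is integrable on $[0,\infty)$ precisely when $\mathrm{Re}[s]>0$. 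Dominated convergence then permits the interchange.

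After interchanging, each term reduces to the scalar integral $\int^\infty_0 \ee^{(k_1\lambda_1+\cdots+k_n\lambda_n-s)t}\,\dd t = 1/\bigl(s-(k_1\lambda_1+\cdots+k_n\lambda_n)\bigr)$, convergent since $\mathrm{Re}[s]>0\geq\mathrm{Re}[k_1\lambda_1+\cdots+k_n\lambda_n]$, which yields \eqref{eqn:expansion2}. For the ROC, the contraction estimate $\|{U}^t_\cB f\|\leq\|f\|$ established after \eqref{eqn:off-attractor-EP} lets one take $w=0$, $M=1$ in Proposition~5.5 of \cite{Engel:2000}, so \eqref{eqn:Laplace} and the term-by-term integrals both hold for $\mathrm{Re}[s]>0$; since all poles $k_1\lambda_1+\cdots+k_n\lambda_n$ lie in the closed left half-plane, the ROC is exactly $\{s\in\bC:\mathrm{Re}[s]>0\}$. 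I expect the only genuine obstacle to be the dominated-convergence step, and specifically securing the \emph{uniform} partial-sum bound over finite index sets; this is where orthogonality of the ${P}_{k_1\cdots k_n}$ in the Segal-Bargmann norm is indispensable, since the corresponding estimate would fail in $\sL^2$ where the eigenfunctions are not orthogonal. The remaining steps are a routine term-by-term computation identical in spirit to the on-attractor case.
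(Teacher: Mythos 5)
Your proposal is correct and follows essentially the same route as the paper's proof: substitute the spectral expansion \eqref{eqn:off-attractor-EP} into \eqref{eqn:Laplace}, bound the partial sums uniformly via the orthogonality of the ${P}_{k_1\cdots k_n}$ and Parseval's identity \eqref{eq:norm_Foch} together with $\mathrm{Re}[\lambda_j]<0$, and invoke dominated convergence to integrate term by term. Your explicit emphasis that the uniform bound lives in the Segal--Bargmann norm (and would fail in $\sL^2$) matches the paper's reliance on \eqref{eq:norm_Foch}, so no substantive difference remains.
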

\begin{proof}
Applying \eqref{eqn:Laplace} and \eqref{eqn:projection2} to \eqref{eqn:off-attractor-EP}, we obtain 
\[
{R} ({s}; {L}_\cB) f 
= \int^\infty_{0} \sum_{k_1,\ldots,k_n\in\bN_0} 
\ee^{(k_1\lambda_1+\cdots+k_n\lambda_n-s)t} {P}_{k_1\cdots k_n} f \, \dd t 
\]
for all $f \in \mathcal{S}_{\h}$. 
We have that, for some finite $N$,
\begin{equation*}
\begin{split}
\left\|\sum_{\substack{k_1,\ldots,k_n\in\bN_0 \\
k_1+\cdots+k_n \leq N}} \ee^{(k_1\lambda_1+\cdots+k_n\lambda_n-s)t} {P}_{k_1\cdots k_n} f \right\|^2_{\mathcal{S}_{\h}}
& {=} \sum_{\substack{k_1,\ldots,k_n\in\bN_0 \\
k_1+\cdots+k_n \leq N}}  \ee^{2\mathrm{Re}[k_1\lambda_1+\cdots+k_n\lambda_n-s]t} \\
& \makebox[12em]{} \times|V_{k_1\cdots k_n}|^2 k_1!\cdots k_n! \\
& \leq \ee^{-2\mathrm{Re}[s]} \sum_{k_1,\ldots,k_n\in\bN} |V_{k_1\cdots k_n}|^2 k_1!\cdots k_n! \\
&= \ee^{-2\mathrm{Re}[s]} \|f\|^2_{\mathcal{S}_{\h}}
\end{split}
\end{equation*}
where we used \eqref{eq:norm_Foch} and the fact that $\textrm{Re}[\lambda_j]<0$. 
Since $\ee^{-\mathrm{Re}[s]t}$ is integrable in $t\in[0,\infty)$ and the finite sum $\sum_{k_1+\cdots+k_n \leq N} \ee^{(k_1\lambda_1+\cdots+k_n\lambda_n-s)t} {P}_{k_1\cdots k_n} f$ is {strongly} convergent as $N\to\infty$ if $\mathrm{Re}[s]>0$, we can apply the Lebesgue dominated convergence theorem and obtain
\[
{R} ({s}; {L}_\cB) f 
=  \sum_{k_1,\ldots,k_n\in\bN_0} \int^\infty_{0}
\ee^{(k_1\lambda_1+\cdots+k_n\lambda_n-s)t} {P}_{k_1\cdots k_n} f \, \dd t{,}
\]
so that the result follows.
\end{proof}

{The modulated Segal-Bargmann space $\mathcal{S}_{\h}$ is a reproducing kernel Hilbert space (see e.g. Section~6 of \cite{Mezic_JNS2019}) so that \eqref{eqn:KEY} holds.}
For an analytic observable $f$ such that $f(\x^\ast)=0$, the unilateral Laplace transform $Y({s}; \x_0)$ of the output $\{y(t)=f(\x(t))\}_{t\geq 0}$ with initial condition $\x_0\in \cB(\x^\ast)\setminus\{\x^\ast\}$ is derived from \eqref{eqn:KEY}, \eqref{eqn:projection2}, and \eqref{eqn:expansion2} as
\begin{align}
Y({s}; \x_0) &= [{R} ({s}; {L}_\cB)f](\x_0) \nonumber\\
&
= \sum^{n}_{j=1}\frac{\phi_j(\x_0){V_{0\cdots (k_j=1)\cdots 0}}}{{s}-\lambda_j} 
+\sum_{\substack{k_1,\ldots,k_n\in\bN\\ k_1+\cdots+k_n>1}}
\frac{\{\phi_1(\x_0)\}^{k_1}\cdots\{\phi_n(\x_0)\}^{k_n}V_{k_1\cdots k_n}}
{{s}-(k_1\lambda_1+\cdots+k_n\lambda_n)}. 
\label{eqn:expansion2-2}
\end{align}
The ROC becomes $\{{s}\in\bC : \mathrm{Re}[{s}]>\mathrm{Re}[\lambda_1]\}$ from the order \eqref{eqn:EigenValues} of eigenvalues. 
The first term on the right-hand side of \eqref{eqn:expansion2-2} is related to the linearized system of \eqref{eqn:syst} around $\x^\ast$, but the coefficients $\phi_j(\x_0)V_{0\cdots (k_j=1)\cdots 0}$ depend on the initial condition $\x_0$ that is located in $\cB(\x^\ast)$, but possibly far from $\x^\ast$.
The second term does not appear in the linearized system and accounts for the non-stationary (wandering) dynamics due to the nonlinearity in \eqref{eqn:syst}. 
The Laplace transform $Y({s}; \x_0)$ is globally valid inside the basin $\cB(\x^\ast)$ of the nonlinear dynamical system \eqref{eqn:syst}, which cannot be achieved through linearization around $\x^\ast$ in the state space. 

\begin{remark}
For systems with an unstable equilibrium (e.g. hyperbolic saddle and source), an issue arises since the Laplace transform derived from the Koopman resolvent is not characterized by a proper region of convergence. 
For example, in the simple case $\dot{x}={\varepsilon} x$ (${\varepsilon}>0)$, the Koopman eigenvalues are given by $\lambda_n=n{\varepsilon}$ for any large positive integer $n$, so that the Laplace transform does not possess a region of convergence in the complex right half plane. 
{In other words, there is no spectral expansion of $y(t)=(U^t f)(\x_0)$ of the form \eqref{eq:Laplace_expansion} (see also Remark \ref{rem:spec_expan}) which is convergent for $t \geq 0$.}
This issue could be tackled by selecting appropriate function spaces where the spectral expansion is related to convergent series expansion of the flow (e.g., Laurent series).
\end{remark}

\subsection{Off-Attractor Evolution to Stable Limit Cycle}
\label{subsec:case-3}

Lastly, we consider the case of a system \eqref{eqn:syst} possessing a stable limit cycle $\cA$ with angular frequency ${\it\Omega}\,(>0)$ and basin of attraction $\cB(\cA)$.  
The topological property of $\cB(\cA)$ is the same as in $\cB(\x^\ast)$ (see \cite{Chiang_IEEETAC33}). 
It can be shown that the dynamics in $\cB(\cA)$ can be transformed to
\[
\left\{
\begin{aligned}
\dot{\y} & = \mathbf{A}(\theta) \, \y \\
\dot{\theta} & = \it{\Omega}
\end{aligned}
\right.
\]
with $\y \in \mathbb{R}^{n-1}$, $\theta \in \mathbb{T}$, and ${A}(\theta) \in \mathbb{R}^{(n-1)\times(n-1)}$ is $2\pi$-periodic \cite{Lan_PD242}. 
Let the eigenvalues ${\mu}_1,\ldots,{\mu}_{n-1}$ of the Floquet stability matrix $\mathbf{B}$ be distinct, strictly positive, and sorted so that 
\begin{equation}
|{\mu}_{j+1}|\leq
|{\mu}_{j}|\leq
|{\mu}_1|<1,\quad j=2,\ldots,n-2.
\label{eqn:Multipliers}
\end{equation}
These values are the characteristic multipliers of the limit cycle, and any $\nu_j$ {satisfying} ${\mu}_j=\ee^{2\pi\nu_j/{\it\Omega}}$ is called a characteristic exponent. 

It is shown in {Theorem~8.1 of} \cite{Mezic_JNS2019} that the characteristic exponents are also the eigenvalues of ${U}^t_\cB$. 
The corresponding eigenfunctions are the components of\\
$\z(\y,\theta)=(z_1(\y,\theta), \cdots, z_{n-1}(\y,\theta))^\top$, with 
\[
\z(\y,\theta) = \V^{-1} \bf P^{-1}(\theta) \y
\]
and where $\bf P(\theta)$ is the Floquet matrix and $\V$ is the diagonalizing matrix for $\mathbf{B}$. Note that 
\[
\phi_{k_1\cdots k_{n-1},{m}}(\x) = z_1^{k_1}(\y,\theta) \cdots z_{n-1}^{k_{n-1}}(\y,\theta) \ee^{\ii m\theta}
\]
is a Koopman eigenfunction associated with the Koopman eigenvalue $k_1 \nu_1+\cdots+k_{n-1} \nu_{n-1} + \ii m \Omega$. 

Following {Remark~8.1 of} \cite{Mezic_JNS2019} and \cite{Ryan_Preprint:2014}, we consider the space {$\mathcal{S}_{\z,\theta}$} of functions that are analytic (entire) in $\z\in \mathbb{C}^{n-1}$ and $\sL^2$ in $\theta \in \mathbb{T}$, endowed with the norm
\[
\|f\|_{{\mathcal{S}_{\z,\theta}}} = \frac{1}{\pi^{n-1}} \int_{\mathbb{T}} \int_{\mathbb{C}^{n-1}} |f(\z,\theta)|^2 \ee^{|\z|^2}\dd\z \,\dd\theta \qquad \forall f\in\mathcal{S}_{{\z,\theta}}.
\]
The space {$\mathcal{S}_{\z,\theta}$} is called in {Section~7 of} \cite{Mezic_JNS2019} the averaging kernel Hilbert space. 
The norm can be rewritten as
\[
\|f\|_{{\mathcal{S}_{\z,\theta}}}^2 = \sum_{\substack{k_1,\ldots,k_{n-1}\in\bN_0\\ {m}\in\bZ}}  |V_{k_1\cdots k_{n-1},{m}}|^2 k_1!\cdots k_{n-1}!
\]
for $f\in {\mathcal{S}_{\z,\theta}}$ that admits its expansion as follows:
\[
f(\x) = \sum_{\substack{k_1,\ldots,k_{n-1}\in\bN_0\\ {m} \in\bZ}}  \phi_{k_1\cdots k_{n-1},{m}}(\x) V_{k_1\cdots k_{n-1},{m}}. 
\]
The constant $V_{k_1\cdots k_{n-1},{m}}$ is again the Koopman mode and related to the {orthogonal} projection operator ${P}_{k_1\cdots k_{n-1},{m}}$, so that
\begin{equation}
({P}_{k_1\cdots k_{n-1},{m}}f)(\x)
=\phi_{k_1\cdots k_{n-1},{m}}(\x)V_{k_1\cdots k_{n-1},{m}}
\label{eqn:projection3}
\end{equation}
for $f\in {\mathcal{S}_{\z,\theta}}$. 
The spectral expansion of ${U}^t_\cB$ for $f\in {\mathcal{S}_{\z,\theta}}$ is given in {equation~(172) of} \cite{Mezic_JNS2019} by
\begin{align}
({U}^t_\cB f)(\x) =& \sum_{\substack{k_1,\ldots,k_{n-1}\in\bN_0\\ {m}\in\bZ}}
\phi_{k_1\cdots k_{n-1},{m}}(\x) V_{k_1\cdots k_{n-1},{m}}\times \nonumber\\
& \makebox[10em]{}\times 
\ee^{[\{(k_1\nu_1+\cdots+k_{n-1}\nu_{n-1})+\ii{m}{\it\Omega}\}t]} 
\qquad \forall t\geq 0.
\label{eqn:off-attractor-LC}
\end{align}
{The condition $\mathrm{Re}[\nu_j]<0$ for all $j$ implies that $\|{U}^t_\cB f\|^2_{\mathcal{S}_{\z,\theta}} \leq \|f\|^2_{\mathcal{S}_{\z,\theta}}$. Since the series \eqref{eqn:off-attractor-LC} is expanded in an orthonormal basis, it is strongly convergent for all $f\in \mathcal{S}_{\z,\theta}$.}
{Moreover, the Koopman eigenfunctions form a complete orthonormal basis and the spectrum is totally disconnected. It follows that the Koopman generator satisfy the properties of a Riesz spectral operator, so that the Koopman semigroup $\{{U}^t_\cB\}_{t\geq 0}$ is strongly continuous (Theorem~2.3.5 in \cite{Curtain:1995}).}
The expansion formula of the Koopman resolvent ${R}({s}; {L}_\cB)$ is given in the following theorem. 
\begin{theorem}
\label{thm:Stable-LC}
Consider the system \eqref{eqn:syst} with a stable limit cycle $\cA$ and dynamics converging to it.  
For the associated Koopman generator ${L}_\cB: \sD\to {\mathcal{S}_{\z,\theta}}$, the following spectral expansion of the Koopman resolvent ${R} ({s}; {L}_\cB)$ holds:
\begin{align}
{R} ({s}; {L}_\cB) &= \sum_{\substack{k_1,\ldots,k_{n-1}\in\bN_0\\ {m}\in\bZ}} {P}_{k_1\cdots k_{n-1},{m}}
\frac{1}{{s}-\{(k_1\nu_1+\cdots+k_{n-1}\nu_{n-1})+\ii{m}{\it\Omega}\}}. 
\label{eqn:expansion3}
\end{align}
Also, the ROC of ${R} ({s}; {L}_\cB)$ {is} 
$\{{s}\in\bC : \mathrm{Re}[{s}]>0\}$. 
\end{theorem}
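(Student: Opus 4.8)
The plan is to mirror the proofs of the two preceding theorems. Since each $\mathrm{Re}[\nu_j]<0$, the semigroup is a contraction, $\|{U}^t_\cB f\|_{\mathcal{S}_{\z,\theta}}\le\|f\|_{\mathcal{S}_{\z,\theta}}$, so the constants of Proposition~5.5 of \cite{Engel:2000} can be taken as $M=1$ and $w=0$, and the integral representation \eqref{eqn:Laplace} of the resolvent holds for $\mathrm{Re}[s]>0$. Writing $\lambda_{k,m}:=(k_1\nu_1+\cdots+k_{n-1}\nu_{n-1})+\ii m{\it\Omega}$ for brevity, I would insert \eqref{eqn:off-attractor-LC} into \eqref{eqn:Laplace} to obtain
\[
{R}({s}; {L}_\cB) f = \int_0^\infty \sum_{\substack{k_1,\ldots,k_{n-1}\in\bN_0\\ m\in\bZ}} \ee^{(\lambda_{k,m}-s)t} {P}_{k_1\cdots k_{n-1},m} f \, \dd t,
\]
and then integrate term by term using $\int_0^\infty \ee^{(\lambda_{k,m}-s)t}\,\dd t = 1/(s-\lambda_{k,m})$, valid whenever $\mathrm{Re}[s]>\mathrm{Re}[\lambda_{k,m}]$. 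The entire content of the proof is then the justification of exchanging this integral with the doubly-indexed sum.

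The key estimate comes from the orthogonality of the projections ${P}_{k_1\cdots k_{n-1},m}$ in $\mathcal{S}_{\z,\theta}$ together with the norm identity preceding \eqref{eqn:projection3}. For any finite subset $T$ of the index set I would bound
\[
\left\| \sum_{(k,m)\in T} \ee^{(\lambda_{k,m}-s)t} {P}_{k_1\cdots k_{n-1},m} f \right\|^2_{\mathcal{S}_{\z,\theta}} = \sum_{(k,m)\in T} \ee^{2\mathrm{Re}[\lambda_{k,m}-s]t} |V_{k_1\cdots k_{n-1},m}|^2 k_1!\cdots k_{n-1}! \leq \ee^{-2\mathrm{Re}[s]t} \|f\|^2_{\mathcal{S}_{\z,\theta}},
\]
where the inequality uses $\mathrm{Re}[\lambda_{k,m}]=k_1\mathrm{Re}[\nu_1]+\cdots+k_{n-1}\mathrm{Re}[\nu_{n-1}]\le 0$; here $|\mu_j|<1$ in \eqref{eqn:Multipliers} forces $\mathrm{Re}[\nu_j]<0$, while the $\ii m{\it\Omega}$ term contributes nothing to the real part. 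Hence $\ee^{-\mathrm{Re}[s]t}\|f\|_{\mathcal{S}_{\z,\theta}}$ dominates every partial sum and is integrable on $[0,\infty)$ exactly when $\mathrm{Re}[s]>0$; the Lebesgue dominated convergence theorem then licenses the interchange and yields \eqref{eqn:expansion3}. For the ROC, the eigenvalues of maximal real part are those with $k_1=\cdots=k_{n-1}=0$, namely $\ii m{\it\Omega}$, which lie on the imaginary axis, so convergence holds precisely for $\mathrm{Re}[s]>0$.

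The step I expect to be the main obstacle is the convergence bookkeeping for the genuinely doubly-infinite sum, in which $m$ ranges over all of $\bZ$. Unlike the stable-equilibrium case, the eigenvalues $\ii m{\it\Omega}$ accumulate along the whole imaginary axis, so the boundary $\mathrm{Re}[s]=0$ of the ROC is densely occupied by spectrum and there is no real-part margin to exploit. I would therefore fix a joint truncation (say $k_1+\cdots+k_{n-1}\le N$ together with $|m|\le M$) and first confirm \emph{strong} convergence of the truncated partial sums to $f$ as $N,M\to\infty$ — which holds because the normalized monomials $z_1^{k_1}\cdots z_{n-1}^{k_{n-1}}\ee^{\ii m\theta}$ form a complete orthonormal basis of $\mathcal{S}_{\z,\theta}$ — before invoking dominated convergence. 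Orthogonality is exactly what makes the dominating bound above uniform over both truncation parameters, so the estimate is insensitive to the order in which the limits in $N$ and $M$ are taken; this uniformity is the point that needs the most care.
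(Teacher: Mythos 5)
Your proposal is correct and follows essentially the same route as the paper: insert the spectral expansion \eqref{eqn:off-attractor-LC} into the integral representation \eqref{eqn:Laplace}, bound the finite partial sums in the $\mathcal{S}_{\z,\theta}$ norm by $\ee^{-2\mathrm{Re}[s]t}\|f\|^2_{\mathcal{S}_{\z,\theta}}$ using orthogonality of the projections and $\mathrm{Re}[\nu_j]<0$, and invoke dominated convergence to integrate term by term. Your extra care with the joint truncation in $N$ and $M$ and the explicit $t$-dependence in the dominating bound is, if anything, slightly more precise than the paper's write-up.
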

\begin{proof}
Applying \eqref{eqn:Laplace} and \eqref{eqn:projection3} to \eqref{eqn:off-attractor-LC}, we obtain
\begin{equation*}
{R} ({s}; {L}_\cB) f 
= \int^\infty_{0} \sum_{\substack{k_1,\ldots,k_{n-1}\in\bN_0\\ {m}\in\bZ}}
\ee^{[\{(k_1\nu_1+\cdots+k_{n-1}\nu_{n-1})+\ii{m}{\it\Omega}-s\}t]} {P}_{k_1\cdots k_{n-1},{m}}f \, \dd t 
\end{equation*}
for all $f \in {\mathcal{S}_{\z,\theta}}$. We have that, for some finite $N$,
\begin{equation*}
\begin{split}
& \left\|\sum_{\substack{k_1,\ldots,k_{n-1}\in\bN_0\\ {m}\in\bZ}} \ee^{[\{(k_1\nu_1+\cdots+k_{n-1}\nu_{n-1})+\ii{m}{\it\Omega}-s\}t]} {P}_{k_1\cdots k_{n-1},{m}}f \right\|^2_{{\mathcal{S}_{\z,\theta}}} \\
& \quad \leq \sum_{\substack{k_1+\cdots+k_{n-1}\leq N\\ |m|\leq M}}
\ee^{2\mathrm{Re}[k_1\nu_1+\cdots+k_{n-1}\nu_{n-1}-s]t} |V_{k_1\cdots k_{n-1},m}|^2 k_1!\cdots k_{n-1}! \\
& \quad \leq \ee^{-2\mathrm{Re}[s]} \sum_{\substack{k_1,\ldots,k_{n-1}\in\bN_0\\ {m}\in\bZ}} |V_{k_1\cdots k_{n-1},m}|^2 k_1!\cdots k_{n-1}! = \ee^{-2\mathrm{Re}[s]} \|f\|^2_{{\mathcal{S}_{\z,\theta}}} \,.
\end{split}
\end{equation*}
Since $\ee^{-\mathrm{Re}[s]t}$ is integrable in $t\in[0,\infty)$ and the finite sum is convergent if $\mathrm{Re}[s]>0$, we can apply the Lebesgue dominated convergence theorem and obtain
\begin{equation*}
{R} ({s}; {L}_\cB) f 
= \sum_{\substack{k_1,\ldots,k_{n-1}\in\bN_0\\ {m}\in\bZ}} \int^\infty_{0}
\ee^{[\{(k_1\nu_1+\cdots+k_{n-1}\nu_{n-1})+\ii{m}{\it\Omega}-s\}t]} {P}_{k_1\cdots k_{n-1},{m}}f \, \dd t 
\end{equation*}
so that the result follows. 
\end{proof}

For illustration purpose, let us consider the planar case with $n=2$.  
The single characteristic multiplier ${\mu} \in \mathbb{R}$ and the associated exponent $\nu \in \mathbb{R}$ quantify the non-stationary (wandering) dynamics transversal to the limit cycle $\cA$ in the state plane. 
{The averaging kernel Hilbert space $\mathcal{S}_{\z,\theta}$ is a reproducing kernel Hilbert space (see \cite{Mezic_JNS2019}, Section 7), so that \eqref{eqn:KEY} holds.}
The unilateral Laplace transform $Y({s}; \x_0)$ of the output $\{y(t)=f((\x(t))\}_{t\geq 0}$ with initial condition $\x_0\in \cB(\cA)\setminus\cA$ is derived from \eqref{eqn:KEY}, \eqref{eqn:projection3}, and \eqref{eqn:expansion3} as
\begin{equation}
Y({s}; \x_0)
= [{R} ({s}; {L}_\cB)f](\x_0)
= \sum_{{m}\in\bZ}\frac{\phi_{0,{m}}(\x_0)V_{0,{m}}}{{s}-\ii{m}{\it\Omega}}
+ \sum_{\substack{k\in\bN\\ {m}\in\bZ}}
\frac{\phi_{k,{m}}(\x_0)V_{k,{m}}}{{s}-(k\nu+\ii{m}{\it\Omega})}. 
\label{eqn:expansion3-2}
\end{equation}
The first term on the right-hand side possesses pure imaginary poles only\footnote{The existence of zero pole implies non-zero time-average (mean) of $f(\x(t))$ on the limit cycle due to the underlying dynamics $\x(t)$ and the choice of observable $f$.
} and hence represents the \emph{stationary} modes evolving along the limit cycle and sustaining in time. 
The term is related to the Fourier expansion of the signal with respect to fundamental angular frequency $\it\Omega$. 
The second term includes the exponent $\nu$ in poles and hence represents the \emph{non-stationary} modes evolving to the limit cycle and decaying in time. 
As pointed out for \eqref{eqn:expansion2-2}, the Laplace transform $Y({s}; \x_0)$ in \eqref{eqn:expansion3-2} is globally valid inside the basin $\cB(\cA)$ of the nonlinear system \eqref{eqn:syst}.

\subsection{Off-Attractor Evolution to Stable Quasi-Periodic Cycle}

The above expansion is extended to the case where the system \eqref{eqn:syst} possesses a stable quasi-periodic cycle $\cA$ with a basin of attraction $\cB(\cA)$.  
The set $\cA$ corresponds to an $\ell$-dimensional torus on which the dynamics are conjugate to
\[
\dot{\theta}_j=\mathit{\Omega}_j,\quad\theta_j\in\mathbb{T},\quad\mathit{\Omega}_j\in\bR
\]
where $j=1,\ldots,\ell$, and the angular frequencies $\mathit{\Omega}_1,\ldots,\mathit{\Omega}_\ell$ satisfy the non-resonance condition. 
By the same manner as above, it is assumed that $\F$ is analytic ($\F\in\sC^\omega(\mathbb{R}^n)$) and $\sF$ is a space of observables that are $\sL^2$ in $\cA$ and $\sC^\omega$ in $\cB(\cA)\setminus{\cA}$.
Also, let $n-\ell$ quasi-characteristic exponents $\nu_1,\ldots,\nu_{n-\ell}$ of \eqref{eqn:syst} be distinct and not equal to one.  
Then, it is shown in {Theorem~9.1 of} \cite{Mezic_JNS2019} that the Koopman semigroup $\{{U}^t_\cB\}_{t\geq 0}$ (with its generator ${L}_\cB$) has a point spectrum only consisting of eigenvalues $\{(k_1\nu_1+\cdots+k_{n-\ell}\nu_{n-\ell})+\ii(m_1{\it\Omega}_1+\cdots+m_\ell{\it\Omega}_\ell\}_{k_j\in\bN_0,m_j\in\bZ}$. 
Note that we assume that the following KAM condition on the $n-\ell$ frequencies is satisfied (see {equation~(179) of} \cite{Mezic_JNS2019}): 
\begin{equation}
k_1\nu_1+\cdots+k_{n-\ell}\nu_{n-\ell} \geq c/\left(\sqrt{k^2_1+\cdots+k^2_{n-\ell}}\right)^\gamma,
\label{eqn:KMA}
\end{equation}
for some $c,\gamma>0$.
The associated Koopman eigenfunctions $\phi_{k_1\cdots k_{n-\ell},m_1\cdots m_\ell}$ satisfy the same equality as \eqref{eqn:projection3}. 
Therefore, the following theorem of the expansion of the resolvent ${R}({s}; {L}_\cB)$ holds (its proof is almost the same as for Section~\ref{thm:Stable-LC} and this omitted here).
\begin{theorem}
Consider the system \eqref{eqn:syst} with a stable quasi-periodic cycle $\cA$ and dynamics converging to it. Suppose that the KMA condition \eqref{eqn:KMA} on the $n-\ell$ frequencies of $\cA$ holds. 
Then, for the associated Koopman generator ${L}_\cB: \sD\to \mathcal{S}$, where $\mathcal{S}$ is the space of functions that are analytic (entire) in $\mathbb{C}^{n-\ell}$ and $\sL^2$ in $\mathbb{T}^\ell$, the following spectral expansion of the Koopman resolvent ${R} ({s}; {L}_\cB)$ holds:
\begin{align}
{R} ({s}; {L}_\cB) =& \sum_{\substack{k_1,\ldots,k_{n-\ell}\in\bN_0\\ m_1,\ldots,m_\ell\in\bZ}} 
{P}_{k_1\cdots k_{n-\ell},m_1,\cdots m_\ell}\times \nonumber\\
&
\times\frac{1}
{{s}-\{(k_1\nu_1+\cdots+k_{n-\ell}\nu_{n-\ell})+\ii(m_1{\it\Omega}_1+\cdots+m_\ell{\it\Omega}_\ell)\}}. 
\label{eqn:expansion4}
\end{align}
Also, the ROC of ${R} ({s}; {L}_\cB)$ {is} 
$\{{s}\in\bC : \mathrm{Re}[{s}]>0\}$. 
\end{theorem}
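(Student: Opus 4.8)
The plan is to reproduce the argument of Theorem~\ref{thm:Stable-LC} almost verbatim, substituting the quasi-periodic spectral data for the limit-cycle one. Writing $\Lambda:=(k_1\nu_1+\cdots+k_{n-\ell}\nu_{n-\ell})+\ii(m_1{\it\Omega}_1+\cdots+m_\ell{\it\Omega}_\ell)$ for the Koopman eigenvalue attached to the multi-index $(k,m)=(k_1,\ldots,k_{n-\ell},m_1,\ldots,m_\ell)$, and $P_{k,m}$, $V_{k,m}$ for the associated projection and Koopman mode, I would first invoke the semigroup expansion established in Theorem~9.1 of \cite{Mezic_JNS2019},
\begin{equation*}
({U}^t_\cB f)(\x)=\sum_{\substack{k_1,\ldots,k_{n-\ell}\in\bN_0\\ m_1,\ldots,m_\ell\in\bZ}}(P_{k,m}f)(\x)\,\ee^{\Lambda t},\qquad t\geq0,\; f\in\mathcal{S},
\end{equation*}
with the projections defined as in \eqref{eqn:projection3}. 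Inserting this into the integral representation \eqref{eqn:Laplace} gives ${R}(s;{L}_\cB)f=\int_0^\infty\ee^{-st}({U}^t_\cB f)\,\dd t$ as a single integral of an infinite sum; the whole task then reduces to justifying the exchange of the integral with the sum and evaluating the resulting scalar integrals.

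For the exchange I would repeat the dominated-convergence estimate of Theorem~\ref{thm:Stable-LC}. Since the normalized eigenfunctions $\phi_{k_1\cdots k_{n-\ell},m_1\cdots m_\ell}/\sqrt{k_1!\cdots k_{n-\ell}!}$ form an orthonormal basis of $\mathcal{S}$, every finite truncation of the integrand satisfies
\begin{equation*}
\Big\|\sum_{\mathrm{finite}}\ee^{(\Lambda-s)t}P_{k,m}f\Big\|^2_{\mathcal{S}}=\sum_{\mathrm{finite}}\ee^{2\mathrm{Re}[\Lambda-s]t}\,|V_{k,m}|^2\,k_1!\cdots k_{n-\ell}!\leq\ee^{-2\mathrm{Re}[s]t}\,\|f\|^2_{\mathcal{S}},
\end{equation*}
because stability gives $\mathrm{Re}[\nu_j]<0$, whence $\mathrm{Re}[\Lambda]=k_1\mathrm{Re}[\nu_1]+\cdots+k_{n-\ell}\mathrm{Re}[\nu_{n-\ell}]\leq0$ and each factor $\ee^{2\mathrm{Re}[\Lambda]t}\leq1$, the final equality being the norm identity of $\mathcal{S}$. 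The dominating function $\ee^{-\mathrm{Re}[s]t}$ is integrable on $[0,\infty)$ precisely when $\mathrm{Re}[s]>0$, so the Lebesgue dominated convergence theorem permits swapping the integral and the sum; each scalar integral then evaluates to $\int_0^\infty\ee^{(\Lambda-s)t}\,\dd t=(s-\Lambda)^{-1}$ since $\mathrm{Re}[s]>0\geq\mathrm{Re}[\Lambda]$. Assembling the terms yields \eqref{eqn:expansion4}, and the convergence constraint $\mathrm{Re}[s]>0$ is exactly the claimed ROC.

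The step that genuinely differs from the limit-cycle case, and which I expect to be the crux, is \emph{not} the resolvent computation but the validity of the underlying semigroup expansion in $\mathcal{S}$. Because ${\it\Omega}_1,\ldots,{\it\Omega}_\ell$ are incommensurate, the eigenvalues $\Lambda$ accumulate densely along the imaginary axis, so that (as noted in Remark~\ref{rem:spec_expan}) $0$, and indeed every purely imaginary point, is a non-isolated singularity of the resolvent. This is precisely where the KAM condition \eqref{eqn:KMA} enters: it controls the small divisors and guarantees, through Theorem~9.1 of \cite{Mezic_JNS2019}, that the eigenfunctions $\phi_{k_1\cdots k_{n-\ell},m_1\cdots m_\ell}$ are well defined and that the expansion of $({U}^t_\cB f)$ converges in $\mathcal{S}$. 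Once that expansion is granted, the remainder is identical to Theorem~\ref{thm:Stable-LC}. I would only add the caveat that the dense imaginary spectrum prevents any analytic continuation of ${R}(s;{L}_\cB)$ across $\ii\bR$, so that $Y_{\mathrm{p}}(s)$ is non-meromorphic, consistent with Remark~\ref{rem:spec_expan}.
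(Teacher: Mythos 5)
Your proposal is correct and follows essentially the same route as the paper, which itself omits the proof on the grounds that it is ``almost the same'' as that of Theorem~\ref{thm:Stable-LC}: you invoke the semigroup expansion from Theorem~9.1 of \cite{Mezic_JNS2019}, repeat the orthonormal-basis norm estimate and dominated-convergence exchange, and evaluate the scalar integrals for $\mathrm{Re}[s]>0$. Your added observation that the KAM condition \eqref{eqn:KMA} is what secures the underlying semigroup expansion (rather than the resolvent computation itself) is a correct and welcome clarification of why the hypothesis appears in the statement.
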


\begin{remark}
The case of a general, aperiodic---potentially chaotic---attractor can be\\ 
treated through the use of formula 
(112) in \cite{Mezic_JNS2019}, in which case the continuous spectrum can contribute to resolvent analysis.
\end{remark}

\section{On Computational Aspects}
\label{sec:comp}

\subsection{General Remarks}

Here, we discuss the computational aspect of the derived expansion formulae and Laplace transforms of the outputs. 
In the linear case, as shown in Section~\ref{sssec:Linear}, it is analytically possible to obtain the Laplace transforms of state and output directly from the state-space representation $(\A,{\bf c})$ in Table\,\ref{tab:idea}. 
But, in the nonlinear case, it is generally impossible to obtain analytical forms of such Laplace transforms from the state-space representation \eqref{eqn:syst}. 
Thus, it is necessary to rely on numerical computation in order to approximately evaluate their properties, which are poles (corresponding to ``Koopman eigenvalues") and residues (related to ``Koopman eigenfunctions and modes") of $Y\sub{p}({s})$ in \eqref{eqn:expansion1-2}, \eqref{eqn:expansion2-2}, and \eqref{eqn:expansion3-2}. 
The discussion can be expanded into vector-form without loss of generality; therein, the Koopman modes can have vector-form if the observable is a vector-valued function: see \eqref{eqn:expansion-vdP}.  
Also, similar requirements are associated with the function $Y\sub{c}({s})$ corresponding to the continuous spectrum in \eqref{eqn:expansion1-2}. 
In this case, the notion of poles is generalized and the eigenvalues are replaced by the continuous part of the spectrum. 

On the data-analysis side, the Koopman-operator framework has recently gained popularity in conjunction with numerical methods. 
The methods are, among others, variants of the Dynamic Mode Decomposition (DMD) \cite{Schmid_JFM656,Rowley_JFM641,Chen_JNLS22,Jovanovic_PF26,Tu_JCD1,Matt_JNLS25,
Kutz:2016} and Generalized Laplace Analysis (GLA) \cite{Marko_CHAOS22,Mezic_ARFM45}.  
DMD is capable of evaluating the poles of the Laplace transforms \eqref{eqn:expansion1-2}, \eqref{eqn:expansion2-2}, and \eqref{eqn:expansion3-2}, while DMD and GLA are capable of evaluating their residues. 
The methods operate directly on data and can approximate part of the spectrum of the underlying Koopman operator.  
Convergence of the numerical methods is guaranteed in \cite{Arbabi_SIAMJADS16,Korda_JNS28} under certain conditions.

The identification of poles and residues is of great interest in science and technology: see, e.g., \cite{VanBlaricum_IEEETAP23,VanBlaricum_IEEETEMC20}. 
Since these are related to Koopman eigenvalues, eigenfunctions, and modes, they can be computed using the DMD algorithm. 
The Prony analysis \cite{Hildebrand:1956} is also formulated in \cite{Susuki_CDC15} for computing the Koopman eigenvalues and modes and used in signal processing and power grids, among others. 
For wider applications, it is required to estimate not only modes for pure-imaginary poles but also modes for poles with negative real parts. 
This is challenging from the point of view of numerics (especially in the Koopman operator framework) due to the ``non-stationary" nature of the modes. 
For instance, it is difficult to extract the poles of \eqref{eqn:expansion3-2} directly from data without knowledge on the system. 
In \cite{Korda_JNS28}, the DMD eigenvalues are proved to converge in some sense even if they have negative real-part, but these are still difficult to compute in practice.
For instance, it is an issue to clearly extract the first and second terms on the right-hand side of \eqref{eqn:expansion3-2} in terms of the time domain. 
This issue is discussed in the rest of this paper with illustrative numerical examples.

\subsection{Example of Stable Limit Cycle}
\label{sec:vdP}

Below, we discuss the result in \eqref{eqn:expansion3-2} for the van der Pol oscillator, which possesses a stable limit cycle in the state plane. 
The goal here is to compute the poles and residues, and then to decompose the dynamics into stationary (periodic) and non-stationary (transient) components. 
We first locate poles and then decompose the dynamics.   
The poles can be computed with the DMD algorithm. However, for better accuracy, we use here direct analysis of the system model, such as locating eigenvalues of adjoint (variational) equations. 
The decomposition, and in particular its non-stationary component, is obtained through the notion of \emph{isochron}, which is crucial to the analysis of asymptotically periodic systems. 
An isochron is a set of states partitioning the basin of attraction in such a way that trajectories starting from the same isochron asymptotically converge to the same orbit on the limit cycle \cite{Mauroy_CHAOS22}. 
Therein, it was shown that isochrons are level sets of a Koopman eigenfunction. 

The dynamics of the classical van der Pol oscillator is represented by
\begin{equation}
\left\{
\begin{aligned}
\dot{x}_1&=x_2,\quad \\
\dot{x}_2&={\varepsilon}(1-x^2_1)x_2-x_1,
\end{aligned}
\right.
\label{eqn:vdP}
\end{equation}
with parameter ${\varepsilon}=0.3$. 
The angular frequency of the limit cycle is ${\it\Omega}\approx 0.995$ and the negative Floquet exponent is $\nu\approx -0.301$, which are computed with FFT and variational equations, respectively. 
Here, we consider the two observables $f_1(\x)=x_1$ and $f_2(\x)=x_2$ where $\x=(x_1,x_2)^\top$, that is, the state itself. 
Then, the Laplace transform \eqref{eqn:expansion3-2} is extended in vector form as
\begin{align}
\left[\begin{array}{c}
X_1({s}; \x_0) \\
X_2({s}; \x_0)
\end{array}\right]
:=&
\left[\begin{array}{c}
[{R} ({s}; U_\cB)f_1](\x_0) \\
{[{R} ({s}; U_\cB)f_2] (\x_0)}
\end{array}\right]
\nonumber \\
=& \sum_{{m}\in\bZ}\frac{\phi_{0,{m}}(\x_0)\V_{0,{m}}}{{s}-\ii{m}{\it\Omega}}
+ \sum_{\substack{k\in\bN\\ {m}\in\bZ}}
\frac{\phi_{k,{m}}(\x_0)\V_{k,{m}}}{{s}-(k\nu+\ii{m}{\it\Omega})}. 
\label{eqn:expansion-vdP}
\end{align}
The vectors $\V_{k,m}\in\bC^2$ correspond to the Koopman modes derived by projecting the two observables $f_1$ and $f_2$ onto the eigenfunction $\phi_{k,m}$.

\begin{figure}[t]
\centering
\includegraphics[width=0.6\textwidth]{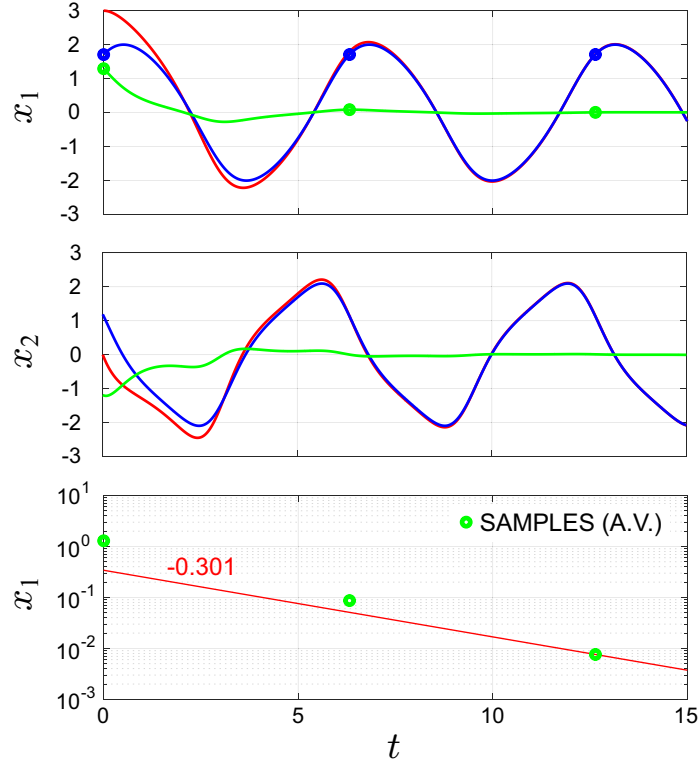}
\caption{%
Time evolutions associated with the Laplace transform of the states of van der Pol oscillator.
In the first and second figures from top, the \emph{red} lines represent a trajectory of the van der Pol oscillator. 
This time evolution can be decomposed in two parts: a stationary part (in blue) associated with Koopman eigenvalues $\ii m{\it\Omega}$ in \eqref{eqn:expansion-vdP} and a non-stationary part (in green) associated with Koopman eigenvalues $k\nu+\ii m{\it\Omega}$. 
In the third figure, the samples of the green line in $x_1$ under the period of the limit cycle are plotted in logarithmic scale, where we use absolute values. 
}%
\label{fig:vdP1}
\end{figure}

Here, we conduct a decomposition of time-domain simulation into the stationary and non-stationary parts. 
The red lines in the first and second figures of Figure\,\ref{fig:vdP1} are the time evolution $\x(t)$ of the states of the van der Pol oscillator starting from $\x(0)=(3,0)^\top$. 
The trajectory is simulated with direct numerical integration of \eqref{eqn:vdP} and converges to the limit cycle. 
The time-evolution related to the stationary part (blue lines in Figure\,\ref{fig:vdP1}) has been computed by numerical integration from an initial condition on the limit cycle and on the same isochron as the initial condition $\x(0)=(3,0)^\top$. 
The phase of the isochron can be computed through Fourier averages as in \cite{Mauroy_CHAOS22}. 
Note that it is related to the value of the associated Koopman eigenfunction, and is therefore the argument $\theta$ of the residue $\ee^{\ii\theta}$ for the pole at {$\ii m{\it\Omega}$}.  
Finally, the time evolution associated with the non-stationary part (green lines in Figure\,\ref{fig:vdP1}) is simply obtained as the difference between the time evolution of the dynamics and the stationary evolution. 
Regarding the non-stationary part, in the bottom of Figure\,\ref{fig:vdP1}, the samples of the green line in $x_1$ under the period $\it\Omega$ of the limit cycle are plotted in logarithmic scale, where we use absolute values.
We verify that it decays asymptotically, with its rate asymptotically converging to the negative Floquet exponent $\nu\approx -0.301$.  

\subsection{Example of Stable Quasi-Periodic Cycle}
\label{sec:high-d-syst}

We now consider the four-dimensional system consisting of two coupled van der Pol oscillators
\begin{equation}
\label{vdp_coupled}
\left\{
\begin{array}{rcl}
\dot{x}_1 & = & x_2\,,\\
\dot{x}_2 & = & {\varepsilon} (1-x_1^2)x_2-k_x\,x_1+k\sub{c}(y_1-x_1)\,,\\
\dot{y}_1 & = & y_2\,,\\
\dot{y}_2 & = & {\varepsilon} (1-y_1^2)y_2-k_y\,y_1+k\sub{c}(x_1-y_1)\,.
\end{array}
\right.
\end{equation}
With the parameters ${\varepsilon}=0.3$, $k_x=1$, $k_y=3$, and $k\sub{c}=0.5$, the trajectories converge to a quasiperiodic torus. 
We integrate numerically a trajectory associated with the initial condition $(3,3,3,3)$ (red curve in Figure \ref{fig:torus}). 
This initial condition is characterized by a pair of phases $(\theta_1,\theta_2)$, which are computed with the Fourier averages methods described in \cite{Mauroy_CHAOS22}. 
The stationary part associated with the trajectory corresponds to a trajectory starting on the torus with the same initial phases $(\theta_1,\theta_2)$. 
This second trajectory is also numerically integrated (blue curve in Figure \ref{fig:torus}). 
The transient part is given by the difference between these two trajectories (green curve in Figure \ref{fig:torus}(a)). 
We verify that it converges to zero, which implies that the two trajectories synchronize on the torus (Figure \ref{fig:torus}(b)).

\begin{figure}[t]
\centering
\subfigure[]{\includegraphics[width=0.48\textwidth]{./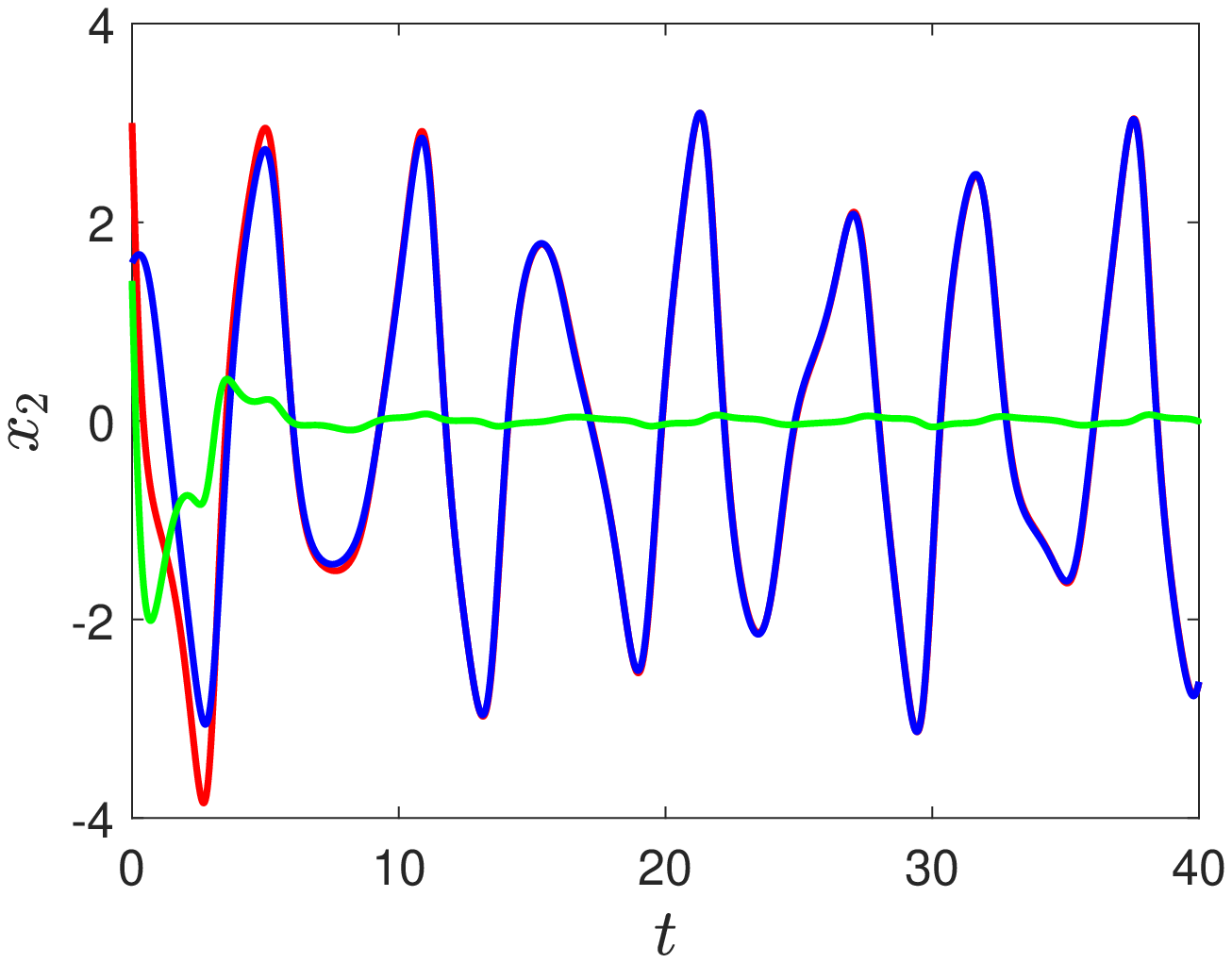}}
\subfigure[]{\includegraphics[width=0.48\textwidth]{./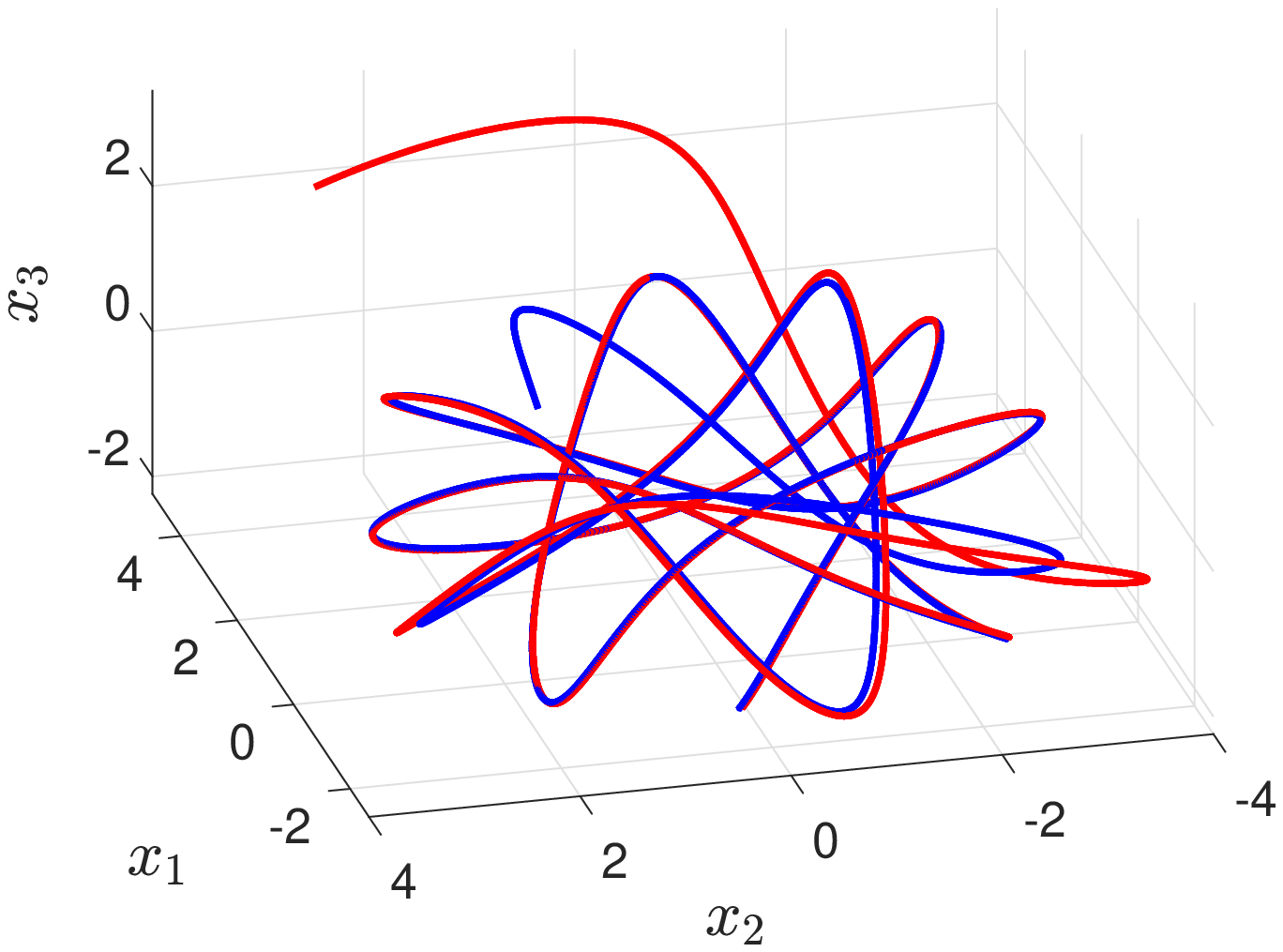}}
\caption{%
Time evolutions associated with the Laplace transform of states for two coupled van der Pol oscillators.
(a) The \emph{red} curve represents the original time evolution of the state $x_2$, the \emph{blue} curve corresponds to the stationary part of the time evolution, and the \emph{green} curve corresponds to the non-stationary part.
(b) The two trajectories (associated with identical phases) synchronize on the torus.
}%
\label{fig:torus}
\end{figure}

\section{Conclusion}
\label{sec:outro}

In this paper, we presented the expansion formulae of the resolvent of the Koopman operator, called the Koopman resolvent, for three types of nonlinear dynamics. 
The formulae do not rely on any linearization in the state space and thus provide the Laplace transform of the output that is clearly connected to the system properties through the Koopman operator. 
The Koopman resolvent and its connection to the nonlinear dynamics have been introduced in this paper.
The proofs provided in this paper are built on \cite{Mezic_ARFM45,Mauroy_PD261,Mezic_JNS2019} and present convergence results of the spectral expansions of the Koopman resolvent.
It is noticeable that the residue theorem was shown to work for the characterization of nonlinear dynamics even in the case when eigenvalues are not isolated (e.g., quasi-periodic case). 
Also, we discussed the computational aspect of the derived Laplace transform using numerical simulations of two simple systems with stable (quasi-)limit cycles. 
The clear separation of stationary and non-stationary parts of the Koopman modes has been emphasized. 
Therefore, we suggest that the notion of isochron is useful for the computation of the Laplace transform for nonlinear non-stationary dynamics. 

We envision several research directions.
A generalization to the case of multivariable output is trivial as shown in Section~\ref{sec:vdP}. 
The case of discrete-time systems can be formulated in the same manner as this paper using the $z$-transformation. 
Computation of the non-stationary part of off-attractor evolution to a chaotic attractor is challenging.  The extension of the Laplace-domain representation to nonlinear systems with inputs is crucial and listed up as the next topic. 
Its application to engineering of power grid dynamic performance is also a possible research perspective.
Finally, current limitations related to unstable equilibria should be considered carefully in the context of nonlinear stabilization problems (e.g. feedback stabilization).

{
\section*{Acknowledgments}

The authors appreciate the reviewers for their valuable suggestion of the manuscript.

\appendix
\section{Derivation of Spectral Expansion \eqref{eqn:on-attractor}}
\label{sec:AppA}

Formula \eqref{eqn:on-attractor} is based on the well-known Stone's theorem for any strongly continuous unitary group $\{U^t_\mathcal{A}\}_{t\in\mathbb{R}}$ in a separable Hilbert space $\mathscr{F}$ (see Theorem~1 in Section~XI-13 of \cite{Yosida:1980}). 
The theorem holds for the case of semigroup ($t\geq 0$) as stated in page~199 of \cite{Plesner:1969}:
\begin{equation}
U^t_\mathcal{A} = \ee^{\ii t H}= \int^\infty_{-\infty}\ee^{\ii \omega t}\dd E(\omega) \qquad \forall t\geq 0
\label{eqn:Stone}
\end{equation}
where $H$ is a unbounded, self-adjoint operator in $\mathscr{F}$, and $E(\omega)$ is the (canonical) resolution of the identity, precisely,  $E(\omega)=E((-\infty,\omega])$, the spectral measure of $H$ on the field $\mathscr{B}(\mathbb{R})$ of all Borel sets of $\mathbb{R}$. 
In this, $L_\mathcal{A}:=\ii H: \mathscr{D}\to\mathscr{F}$ is a unbounded, skew-adjoint operator with domain $\mathscr{D}\subset\mathscr{F}$ and 
\[
L_\mathcal{A}f :=\lim_{t\downarrow 0}\frac{U^t_\mathcal{A}f-f}{t} \qquad \forall f\in\mathscr{D}.
\]
The corollary in page~310 of \cite{Yosida:1980} states that the integral in \eqref{eqn:Stone} implies convergence in strong operator topology, i.e., for $\forall f\in\mathscr{F}$, 
\[
\lim_{R\to\infty}\left\|\int^{R}_{-R}\ee^{\ii \omega t}\dd F(\omega)f - U^t_\mathcal{A}f\right\|=0 \qquad \forall t\geq 0.
\]
From pages~140--141 of \cite{Plesner:1969}, the self-adjoint operator $H$ has the decomposition as
\[
H = H\sub{p}+H\sub{c},
\]
with
\begin{equation}
H\sub{p} :=\sum_{\omega\in n(H)\subset\sigma\sub{p}(H)} \omega P_\omega \label{eqn:Ap}
\end{equation}
where $n(H)$ is the set of all eigenvalues of $H$ with $\mathrm{cl}(n(H))=\sigma\sub{p}(H)$ (point spectrum of $H$), and 
$\displaystyle P_\omega:=E(\omega)-E(\omega-0)$ where $\displaystyle \lim_{\lambda\uparrow\omega}\|[E(\lambda)-E(\omega-0)]f\|=0$ for all $f\in\mathscr{F}$ is the projection operator induced on the eigen-space $\mathscr{F}_\omega$ corresponding to an eigenvalue $\omega\in\mathbb{R}$.  
Here we have used the assumption of ergodicity of the invariant measure, implying that every eigenvalue of $H$ is simple (see Theorem~9.21 of \cite{Arnold:1968}).  
Also, from the self-adjoint property of $H$ on $\sF=\sL^2$, 
we can use the label $j$ of natural numbers to represent the eigenvalues as $\omega_j${, and the} 
$P_{\omega_j}$ are mutually orthogonal. 
That is, $H\sub{p}$ is associated with the point spectrum of $H$, induced on the Hilbert space
\[
\mathscr{F}\sub{p}:=\bigoplus_{j=1}^\infty\mathscr{F}_{\omega_j},
\]
while $H\sub{c}$ is with the continuous spectrum of $H$ and induced on the Hilbert space $\mathscr{F}\sub{c}:=\mathscr{F}\setminus\mathscr{F}\sub{p}$, which is the orthogonal complement of $\mathscr{F}\sub{p}$. 
Note that $H\sub{p}$, namely, the series on the right-hand side of \eqref{eqn:Ap} exists in the sense of strong operator topology because the set $\{\phi_{\omega_j}\in\mathscr{F}\setminus\{0\}\}_{j=1,2,\ldots}$ of eigenfunction $\phi_{\omega_j}$ of the self-adjoint operator $H$ corresponding to an eigenvalue $\omega_j$, defining as $H\phi_{\omega_j} = \omega_j\phi_{\omega_j}$, is an orthonormal basis of $\mathscr{F}\sub{p}$. 
Now we set
\[
E\sub{p}({\it\Lambda}):=\sum_{\omega_j\in(n(A)\cap{\it\Lambda})}P_{\omega_j}, \qquad
E\sub{c}({\it\Lambda}):=E({\it\Lambda})-E\sub{p}({\it\Lambda})
\]
where ${\it\Lambda}\in\mathscr{B}(\mathbb{R})$, and $E\sub{p}({\it\Lambda})$ and $E\sub{c}({\it\Lambda})$ are orthogonal measures on $\mathscr{F}$; in particular, $E\sub{p}(\mathbb{R})$ and $E\sub{c}(\mathbb{R})$ are the projection operators on $\mathscr{F}\sub{p}$ and $\mathscr{F}\sub{c}$. 
The measure $E\sub{p}({\it\Lambda})$ is a point measure and might be considered as acting on $\mathscr{B}(\mathbb{R})$, while the domain of $E\sub{c}({\it\Lambda})$ is $\mathscr{B}(\mathbb{R})$. 
Since $H\sub{c}$ has no point spectrum by construction, the measure $E\sub{c}(\omega)=E\sub{c}((-\infty,\omega])$ is continuous, that is, $E\sub{c}(\omega)=E\sub{c}(\omega-0)$ for all $\omega\in\mathbb{R}$.  
By substituting the decomposition $E({\it\Lambda})=E\sub{p}({\it\Lambda})+E\sub{c}({\it\Lambda})$ into the integral representation \eqref{eqn:Stone} we obtain \eqref{eqn:on-attractor}:
\[
U^t_\mathcal{A}
=\sum_{j=1}^\infty\ee^{\ii \omega_jt} P_{\omega_j}
+\int^\infty_{-\infty}\ee^{\ii \omega t}\dd E\sub{c}(\omega)
\qquad \forall t\geq 0.
\]
Note that in \eqref{eqn:on-attractor} we use $P_j$ and $E$ instead of $P_{\omega_j}$ and $E\sub{c}$ for simple notation.
The derivation shows that because every eigenvalue is assumed to be simple, \eqref{eqn:on-attractor} is specific to the Koopman semigroup for ergodic dynamics and does not hold for any strongly continuous unitary semigroup in a Hilbert space. 
}


\bibliographystyle{unsrt}
\bibliography{../../../text/mybib/mezic,../../../text/mybib/main-SSK,../../../text/mybib/koopman,../../../text/mybib/power}

\end{document}